\newtheorem{theorem}{Theorem}[section]
\theoremstyle{plain}
\newtheorem{lemma}[theorem]{Lemma}
\newtheorem{Remark}{Remark}
\numberwithin{equation}{section}
\begin{document}
\title[Spectral Theory Approach]{Spectral Theory Approach for a Class of Radial
Indefinite Variational Problems 
}
\author{Liliane A. Maia}
\address{Departamento de Matem\'{a}tica, UNB, 70910-900 Bras\'{\i}lia, Brazil.}
\email{lilimaia@unb.br}
\author{Mayra Soares}
\address{Departamento de Matem\'{a}tica, UNB, 70910-900 Bras\'{\i}lia, Brazil.}
\email{ssc\_mayra@hotmail.com}
\thanks{Research supported by FAPDF 0193.001300/2016, CNPq/PQ 308378/2017-2, PROEX/CAPES and Bolsas de Doutorado CNPq, Brazil.}
\date{\today}

\begin{abstract}
Considering the radial nonlinear Schr\"odinger equation
$$
-\Delta u + V(x)u = g(x,u) \text{ \ in \ } \mathbb{R}^N, \ N\geq3 
\eqno{({P}_r)}
$$
we aim to find a radial nontrivial solution for it, where $V$ changes sign ensuring problem $({P}_r)$ is indefinite and $g$ is an asymptotically linear nonlinearity. We work with variational methods associating problem $({P}_r)$ to an indefinite functional in order to apply our Abstract Linking Theorem for Cerami sequences  in \cite{MS} to get a non-trivial critical point for this functional. Our goal is to make use of spectral properties of operator $A:= \Delta + V(x)$ restricted to $H^1_{rad}(\mathbb{R}^N)$, the space of radially symmetric functions in $H^1(\mathbb{R}^N)$, for obtaining a linking geometry structure to the problem and by means of special properties of radially symmetric functions get the necessary compactness.
\end{abstract}
\maketitle

\section{Introduction}

\label{sec:introduction}

\hspace{0.75cm}This paper handles the following radial nonlinear Schr\"odinger equation with a sign-changing potential and an asymptotically linear nonlinearity
\begin{equation}\label{P_r}
-\Delta u + V(|x|)u = g(|x|,u) \text{ \ in \ } \mathbb{R}^N, \ N \geq 3.
\end{equation}
\hspace{0.75cm} Our goal is  to tackle the problem dropping off the monotonicity hypothesis on the nonlinear term, namely $\dfrac{g(x,s)}{s}$ nondecreasing on $s>0$ and loosen the regularity hypotheses on $g$ and $V$. In view of this, we do not look for solutions by constrained minimization either on so called Nehari or Generalized Nehari or Pohozaev  Manifolds, as was done in \cite{BW,ES, MOR, StZh,SW, AP} and references therein. Instead, we exploit the Spectral properties of the Schr\"odinger operator $A:= -\Delta + V(|x|)$ in order to get the linking geometry of the indefinite functional associated to the elliptic equation in (\ref{P_r}). 
Since problem (\ref{P_r}) is radially symmetric, to deal with the Spectral Theory of $A$ restrictive hypotheses on $V$ are not necessary. In fact, it suffices to request informations under an associated operator $\bar{A}$ on the half-line, which is more manageable. Hence we assume that the potential $V$ satisfies:\\

\hspace{-0.5cm}$(V_1)_r  \ V \in L^\infty(\mathbb{R}^N)$ \ is a radial sign-changing function, $V(x)=V(|x|)=V(r), \ r\geq0$;


\hspace{-0.5cm}$(V_2)_r$  Setting $\bar{V}(r)= V(r)+\dfrac{(N-1)(N-3)}{4r^2}$ and $\bar{A} := -\dfrac{d^2}{dr^2} + \bar{V}(r)$, an operator of $L^2(0,\infty)$, \  $0\notin \sigma_{ess}(\bar{A})$ and
$$\sup\left[\sigma(\bar{A})\cap(-\infty,0)\right]= \sigma^{-} < 0 < \sigma^{+} = \inf\left[\sigma(\bar{A})\cap(0,+\infty)\right].$$

\hspace{0.75cm}Moreover, we take the nonlinearity $g$ under the hypotheses:\\

\hspace{-0.5cm}$(g_1) \ \ g(x,s)\in C(\mathbb{R}^N\times \mathbb{R},\mathbb{R})$ is a radial function such that $\displaystyle\lim_{|s| \to 0}\dfrac{g(x,s)}{s}=0$, uniformly in $x$ and for all $t \in \mathbb{R}$,
$$G(x,t)= \displaystyle\int_0^tg(x,s)ds \geq 0;$$
$(g_2) \ \ \displaystyle\lim_{|s| \to +\infty}\dfrac{g(x,s)}{s}= h(x),$ uniformly in $x$, where $h \in L^{\infty}(\mathbb{R}^N)$;\\
\hspace{-0.5cm}$(g_3) \ \ a_0 = \displaystyle\inf_{x \in \mathbb{R}^N} h(x)>\sigma^+=\inf\left[\sigma(A)\cap(0,+\infty)\right];$\\
\hspace{-0.5cm}$(g_4) \ \ $Setting $\mathcal{O}:=A-\mathcal{H}$, where $\mathcal{H}$ is the operator multiplication by $h(x)$ in $L^2(\mathbb{R}^N)$  and denoting by $\sigma_p(\mathcal{O})$ the point spectrum of $\mathcal{O}$,
$$0\notin \sigma_p(\mathcal{O}).$$ 


\hspace{0.75cm}The inspiration for this work came from the papers \cite{AP,StZh}. In the former, A. Azzollini and A. Pomponio treated an autonomous radial nonlinear Schr\"odinger equation with a nonlinearity under Berestycki and Lions hypotheses (cf. \cite{BeLi}). Besides their potential $V \in C^1(\mathbb{R}^N,\mathbb{R})$ satisfied some restrictions on its derivatives, and it had a non-positive limit at infinity, which ensured that $0\in \sigma_{ess}(A)$ in their case. Hence, we complement their work considering cases such that $0\notin\sigma_{ess}(A)$, furthermore, we only require $V\in L^\infty(\mathbb{R}^N)$ such that the spectrum of $A$ has a gap in $0$, which is at most an isolated point of $\sigma(A)$.

\hspace{0.75cm}C. A. Stuart and H. S. Zhou in \cite{StZh} worked with a class of radial nonlinear Schr\"odinger equations depending on $\lambda$, which is a constant potential and an asymptotically linear nonlinearity, but including the monotonicity hypothesis, as mentioned previously. They solved their class of problems by applying a variant of the Mountain Pass Theorem in \cite{BBF}. The most interesting feature in their paper was to make use of the relation between the associated problem on the half-line and the original problem in $\mathbb{R}^N$. Following their ideas, we extract spectral informations from the associated operator $\bar{A}$ on the half-line, to guarantee that problem (\ref{P_r}) satisfies the necessary conditions for the linking geometry.

\hspace{0.75cm}Another notable work, which is worth mentioning is \cite{W} by T. Watanabe. Although the author considers an autonomous radial nonlinear Schr\"odinger equation in $\mathbb{R}^2$, with positive potential and a nonlinearity with monotonicity assumption, our hypotheses are similar to his on the nonlinear term. Furthermore, as in \cite{StZh} T. Watanabe first worked with the associated problem on the half-line, which also encouraged us to make the same.

\hspace{0.75cm}In this article, since we work in $H^1_{rad}(\mathbb{R}^N)$, the novelty lies in using this idea of investigating the spectral properties of the associated operator $\bar{A}$ on the half-line, and by doing this, avoiding a deeper study of the spectral theory of the operator ${A}$ in $H^1(\mathbb{R}^N)$. Thereby, we are able to deal with much more general potentials, for instance those which do not have a limit at infinity.

\hspace{0.75cm}Next section is devoted to present the spectral properties of the operators, the choice of suitable Hilbert spaces, the variational framework and then we state our main result. In section $3$ we prove the required compactness for the associated functional. Section $4$ describes how to establish the linking geometry by means of the sharp construction of the linking components based on the spectral results. The core of our arguments is to take advantage of the strict inequality in $(g_3)$ throughout this section. Finally, in section $5$ the boundedness of Cerami sequences for the functional is obtained and a proof for the main result is presented.

\section{The Variational Setting}

\hspace{0.75cm}Considering $A:=-\Delta+V(x)$ as an operator of $L^2(\mathbb{R}^N)$, since $V \in L^\infty(\mathbb{R}^N)$, $A$ as well as $\bar{A}$ are self-adjoint operators. Due to Hardy's Inequality, operator $\bar{A}$ is treated in $H^1_0(0, \infty)$, which can be written as $H^1_0(0,\infty)=H^-\oplus H^0 \oplus H^+$, with $H^-, \ H^0, \ H^+$ the subspaces of $H^1_0(0,\infty)$ where $\bar{A}$ is respectively negative, null and positive definite. In view of $(V_2)_r$ each $u \in H^+$ satisfies $$\sigma^+||u||^2_{L^2(0,\infty)}\leq (\bar{A}u,u)_{L^2(0,\infty)}.$$ Moreover, given $u\in H^1_0(0,\infty)$ and setting $w:=r^{\frac{1-N}{2}}u$, it yields $w\in H^1_{rad}(\mathbb{R}^N)$ (cf. \cite{StZh} section 3), where $H_{rad}^1(\mathbb{R}^N)$ is the subspace of the radially symmetric functions in $H^1(\mathbb{R}^N)$. In addition, changing variables, $w$ satisfies
$$||w||^2_{L^2(\mathbb{R}^N)} = \int_{\mathbb{R}^N}|w(x)|^2dx = \omega_N\int_0^\infty |u(x)|^2dr = \omega_N||u||^2_{L^2(0,\infty)},$$
and
\begin{eqnarray*}
	(Aw,w)_{L^2(\mathbb{R}^N)}&=& \int_{\mathbb{R}^N}\left(|\nabla w(x)|^2 +V(x)w(x)^2\right)dx\\
	&=& \omega_N\int_0^\infty \left(|u'(r)|^2 +\bar{V}(r)u(r)\right)dr\\
	&=& \omega_N(\bar{A}u,u)_{L^2(0,\infty)},
\end{eqnarray*}
where $\omega_N$ is the $(N-1)$-dimensional surface measure of the sphere $S^{N-1}\subset \mathbb{R}^{N}$. Hence $\sigma^+||w||^2_{L^2(\mathbb{R}^N)}\leq(Aw,w)_{L^2(\mathbb{R}^N).}$ If some function $\tilde{w} \in H^1_{rad}(\mathbb{R}^N)$ has satisfied $$0<(A\tilde{w},\tilde{w})_{L^2(\mathbb{R}^N)}< \sigma^+||\tilde{w}||^2_{L^2(\mathbb{R}^N)},$$ by approximation it could be regarded as a smooth function \ and \ then \ setting $\tilde{u} := r^{\frac{N-1}{2}}\tilde{w}$, it would belong to $H^+$ and would satisfy $$\sigma^+||\tilde{u}||^2_{L^2(0,\infty)}>(\bar{A}\tilde{u},\tilde{u})_{L^2(0,\infty)},$$
which contradicts $(V_2)_r$. Hence, writing $H^1_{rad}(\mathbb{R}^N)=E^-\oplus E^0\oplus E^+$, with $E^-, \ E^0, \ E^+$ the subspaces where $A$ is respectively negative, null and positive definite, if $w\in E^+$ it satisfies $\sigma^+||w||^2_{L^2(\mathbb{R}^N)}\leq ({A}w,w)_{L^2(\mathbb{R}^N)}$.

\begin{Remark}\label{crr1}
	Note that if $\sigma^+$ is an eigenvalue of $\bar{A}$ with eigenfunction $u$, \ the same argument as above \ shows that $\sigma^+$ is an eigenvalue of $A$, \ with a radial \ eigenfunction $w = r^{\frac{1-N}{2}}u\in E^+$. On the other hand, if $\sigma^+$ is not an eigenvalue of $\bar{A}$, either it belongs to $\sigma_{ess}(\bar{A})$ or it is a cluster point of $\sigma(\bar{A})$, then given $\varepsilon>0$ there exist $u_{\varepsilon}\in H^+$ such that
	$$\sigma^+||u_\varepsilon||^2_{L^2(0,\infty)}< (\bar{A}u_\varepsilon,u_\varepsilon)_{L^2(0,\infty)} < (\sigma^+ +\varepsilon)||u_\varepsilon||^2_{L^2(0,\infty)},$$
	which ensures that $w_\varepsilon:= r^{\frac{1-N}{2}}u_\varepsilon \in E^+$ satisfies
	$$\sigma^+||w_\varepsilon||^2_{L^2(\mathbb{R}^N)}< ({A}w_\varepsilon,w_\varepsilon)_{L^2(\mathbb{R}^N)} < (\sigma^+ +\varepsilon)||w_\varepsilon||^2_{L^2(\mathbb{R}^N)}.$$ Therefore,
	\begin{equation}\label{crr1e1}
	\sigma^+ = \inf_{w\in E^+}\dfrac{(Aw,w)_{L^2(\mathbb{R}^N)}}{||w||^2_{L^2(\mathbb{R}^N)}}.
	\end{equation}
	Applying the same arguments comparing $H^-$ and $E^-$, it yields
	\begin{equation}\label{crr1e2}
	-\sigma^- = \inf_{w\in E^-}\dfrac{-(Aw,w)_{L^2(\mathbb{R}^N)}}{||w||^2_{L^2(\mathbb{R}^N)}}.
	\end{equation}
\end{Remark}
\hspace{0.75cm}From hypothesis $(V_2)_r$  either $0 \notin \sigma(\bar{A})$ or it is an isolated eigenvalue of $\bar{A}$.  Since by assumption  $0\notin\sigma_{ess}(\bar{A})$, if $0\in \sigma(\bar{A})$ it is an eigenvalue of finite multiplicity, hence $\ker(\bar{A})$ is finite dimensional. The same conclusions hold for $A$, since there exists a correspondence between the eigenfunctions of $\bar{A}$ and the radial eigenfunctions of $A$. Furthermore, $u_1,u_2 \in H^1_0(0,\infty)$ are orthogonal in $L^2(0,\infty)$ iff $w_1=r^{\frac{1-N}{2}}u_1$ and $w_2=r^{\frac{1-N}{2}}u_2$ are orthogonal in $L^2(\mathbb{R}^N)$. Indeed,
$$\int_0^\infty u_1(r)u_2(r)dr = \dfrac{1}{\omega_N}\int_{\mathbb{R}^N}w_1(x)w_2(x)dx.$$
Therefore, $H^i$ is infinite dimensional iff $E^i$ is infinite dimensional, for $i=-, \ 0, \ +.$

\hspace{0.75cm}A typical example of $V$ satisfying $(V_1)_r-(V_2)_r$ is a suitable continuous, periodic and sign-changing $V(r)$, such that $0\notin \sigma\Big(-\dfrac{d^2}{dr^2}+V(r)\Big)$, hence $0$ is in a gap of the spectrum, which is composed by closed intervals. Since $V(r)$ is continuous and changes sign, $-\dfrac{d^2}{dr^2}+V(r)$ has positive and negative spectrum. Moreover, $\bar{V}=V + V_N$, \ where \ $V_N(r)= \dfrac{(N-1)(N-3)}{4r^2}$ decays sufficiently fast, then it is a Kato's potential and hence $\bar{A}-$compact, which ensures $\sigma_{ess}(\bar{A})=\sigma\Big(-\dfrac{d^2}{dr^2}+V(r)\Big)$ by Weyl's theorem (cf. \cite{O} page 290 Corollary 11.3.6 and also \cite{HS} sections 14.2-14.3), thus $0\notin \sigma_{ess}(\bar{A})$ and $\sigma(\bar{A})$ also has positive and negative part. Therefore, $(V_2)_r$ is satisfied.\\

\begin{Remark}
	Simple examples of potentials which satisfy or not our assumptions:\\
	
	\hspace{-0.4cm}Ex 1. $V(r)= \cos (r)$ satisfies $(V_1)_r-(V_2)_r$ by the previous observations.\\
	
	\hspace{-0.4cm}Ex 2. $V(r)= \dfrac{1}{1+r^2} - \dfrac{1}{2}$, does not satisfy $(V_2)_r$, since $0 \in \sigma_{ess}(\bar{A}).$ In fact, $\displaystyle\lim_{r\to +\infty}V(r)= - \dfrac{1}{2}$, hence $\sigma_{ess}(\bar{A})=\sigma_{ess}(A)= [-\dfrac{1}{2},+\infty).$
	\end{Remark}

\hspace{0.75cm}An example of $g$ satisfying $(g_1)-(g_4)$ is an asymptotically linear continuous function such that $h(x)\equiv a_0>\sigma^+$ as in $(g_3)$, then for a periodic $V$, since $\sigma(A)$ is pure absolutely continuous, $a_0 \notin \sigma_p(A)$ and hence $0\notin \sigma_p(\mathcal{O})$ as in $(g_4)$. Model nonlinearities which appear in Physics of propagations of laser beans in nonlinear medium with saturations are for instance
$$g(x,s)=\dfrac{s^3}{1+ {a_0^{-1}}{s^2}} \qquad \text{and} \qquad g(x,s) = \left(a_0 - \dfrac{1}{\exp{s^2}}\right)s.$$

\begin{Remark}\label{crr2}
	Due to $(g_1)-(g_2)$, given $\varepsilon>0$ and $2\leq p \leq 2^*$ there exists a constant $C_{\varepsilon}>0$	such that
	\begin{equation}\label{cre1}
	|g(x,s)|\leq \varepsilon|s|+ C_{\varepsilon}|s|^{p-1},
	\end{equation}
	and hence
	\begin{equation}\label{cre2}
	|G(x,s)|\leq \dfrac{\varepsilon}{2}|s|^2+ \dfrac{C_{\varepsilon}}{p}|s|^p,
	\end{equation}
	for all $s \in \mathbb{R},$ and for all  $x \in \mathbb{R}^N.$
\end{Remark}

\hspace{0.75cm}The functional $I:H^1(\mathbb{R}^N)\to \mathbb{R}$ associated to problem (\ref{P_r}) is given by 
\begin{equation}\label{cre3}
I(u)= \dfrac{1}{2}(Au,u)_{L^2(\mathbb{R}^N)} - \int_{\mathbb{R}^N}G(x,u)dx.
\end{equation}
Note that, in view of $(V_1)_r$ and $(g_1)-(g_2)$ $I:H^1(\mathbb{R}^N)\to \mathbb{R}$ \ is well defined \ and $I \in C^1(H^1(\mathbb{R}^N),\mathbb{R})$. Thus, as usual, \ a weak solution for $(P_r)$ \ is a critical point \ of $I:H^1(\mathbb{R}^N)\to \mathbb{R}$, a function $u \in H^1(\mathbb{R}^N)$ such that for all $v \in H^1(\mathbb{R}^N)$
$$I'(u)v= (Au,v)_{L^2(\mathbb{R}^N)} - \int_{\mathbb{R}^N}g(x,u(x))v(x)dx = 0.$$

\hspace{0.75cm}In order to obtain a nontrivial critical point of the functional $I$ we make use of an abstract linking theorem proved by the authors in \cite{MS}, which we now recall.

\begin{theorem}\label{ALT}
\textbf{Linking Theorem for Cerami Sequences:} Let $E$ be a real Hilbert space, with inner product $\big( \cdot, \cdot \big)$, $E_1$ a closed subspace of $E$ and $E_2=E_1^{\perp}$. Let $I \in C^1(E,\mathbb{R})$ satisfying:\\

	\hspace{-0.5cm}$(I_1) \ \ I(u)=\dfrac{1}{2}\big(Lu,u\big)+B(u),$ for all $u \in E$, where $u=u_1 +u_2 \in E_1 \oplus E_2$, $Lu=L_1u_1 + L_2u_2$ and\  $L_i: E_i \rightarrow E_i, \ i=1,2$ is a bounded linear self adjoint mapping.\\
	
	\hspace{-0.5cm}$(I_2) \ \ B$ is weakly continuous and uniformly differentiable on bounded subsets of $E$.\\
	
	\hspace{-0.5cm}$(I_3) \ $ There exist Hilbert manifolds $S,Q \subset E$, such that $Q$ is bounded and has boundary $\partial Q$, constants $\alpha > \omega$ and $v \in E_2$ such that\\
	$(i) \ S\subset v + E_1$ and $I \geq \alpha$ on $S$;\\
	$(ii) \ I\leq \omega$ on $\partial Q$;\\
	$(iii) \ S$ and $\partial Q$ link.\\
	
	\hspace{-0.5cm}$(I_4) \  \ $
If for a sequence $(u_n)$, $I(u_n)$ is bounded and  $\left(1+||u_n||\right)||I'(u_n)|| \to 0$, as $n\to +\infty$, then $(u_n)$ is bounded.
	\vspace{0.5cm}\\
	\hspace{-0.5cm}Then I possesses a critical value $c\geq \alpha$.
\end{theorem}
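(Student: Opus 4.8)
The plan is to run the classical minimax scheme, adapted to strongly indefinite functionals in the spirit of Benci--Rabinowitz and Kryszewski--Szulkin, with the Palais--Smale condition replaced by the Cerami-type hypothesis $(I_4)$. Since the linear part $L$ generates a non-compact flow, one cannot deform $Q$ along arbitrary continuous homotopies and still control the linking; instead I would first fix the admissible class
\[
\Gamma=\Big\{h\in C([0,1]\times Q,E):\ h(0,\cdot)=\mathrm{id}_Q,\ I(h(t,u))\le\omega\ \forall u\in\partial Q,\ h(t,u)=e^{\theta(t,u)L}u+K(t,u)\Big\},
\]
where $\theta\in C([0,1]\times Q,\mathbb{R})$ and $K\in C([0,1]\times Q,E)$ sends bounded sets to relatively compact sets. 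One has $\mathrm{id}\in\Gamma$ (because $I\le\omega$ on $\partial Q$), so $\Gamma\neq\emptyset$, and this class is designed so as to be stable under concatenation with the gradient-type flow constructed below. Then I would set
\[
c:=\inf_{h\in\Gamma}\ \sup_{u\in Q} I\big(h(1,u)\big).
\]

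Next I would establish the two elementary bounds. Finiteness $c<+\infty$ follows by testing with $h=\mathrm{id}$: since $Q$ is bounded, $L$ is bounded and, by $(I_2)$, $B$ is weakly continuous and hence bounded on bounded sets (bounded sets being weakly sequentially compact), so $\sup_Q I<+\infty$. For the lower bound $c\ge\alpha$ I would invoke $(I_3)$: for any $h\in\Gamma$ one has $I\ge\alpha$ on $S$ while $I\le\omega<\alpha$ on $h(1,\partial Q)$, so $h(1,\partial Q)$ is disjoint from $S$; the definition of linking in \cite{MS}, which is tailored precisely to maps of the form occurring in $\Gamma$ (via a Leray--Schauder degree computation in $v+E_1$ after quotienting out the compact part $K$), then guarantees $h(1,Q)\cap S\neq\emptyset$. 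Hence $\sup_{u\in Q}I(h(1,u))\ge\inf_S I\ge\alpha$ for every $h\in\Gamma$, and taking the infimum gives $c\ge\alpha$.

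It remains to prove that $c$ is a critical value. Suppose not, i.e. $K_c:=\{u\in E: I(u)=c,\ I'(u)=0\}=\emptyset$. From $(I_1)$--$(I_2)$ the gradient reads $\nabla I(u)=Lu+\nabla B(u)$ with $\nabla B$ compact (a consequence of the weak continuity and uniform differentiability of $B$), so along a bounded sequence with $(1+\|u_n\|)\|I'(u_n)\|\to0$ one may extract $u_n\rightharpoonup u$, $\nabla B(u_n)\to\nabla B(u)$ strongly, whence $Lu_n$ converges and — using the structure of $L$ (in the application $0\notin\sigma_{ess}(A)$, so $L$ is Fredholm) — $u_n$ converges. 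Combined with $(I_4)$ this is the Cerami condition $(C)_c$, so there are $\bar\varepsilon,\delta>0$, with $\bar\varepsilon<c-\omega$, such that $(1+\|u\|)\,\|I'(u)\|\ge\delta$ whenever $|I(u)-c|\le\bar\varepsilon$. I would then construct a locally Lipschitz, Cerami-weighted pseudo-gradient field $W$ for $I$ satisfying $\|W(u)\|\le C(1+\|u\|)$ (so that its flow $\eta$ is global in time), vanishing outside the strip $\{|I-c|\le\bar\varepsilon\}$ (hence $\eta(s,\cdot)=\mathrm{id}$ near $\partial Q$), and arranged — using compactness of $\nabla B$ — so that $\eta(s,w)=e^{\vartheta(s,w)L}w+(\text{compact in }w)$; along $\eta$ the value of $I$ decreases at rate $\gtrsim\delta^{2}(1+\|u\|)^{-2}$ on the strip, so that for suitable $\varepsilon\in(0,\bar\varepsilon)$ one obtains $\eta(1,\{I\le c+\varepsilon\})\subset\{I\le c-\varepsilon\}$. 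Choosing $h\in\Gamma$ with $\sup_Q I(h(1,\cdot))\le c+\varepsilon$ and concatenating $h$ with $s\mapsto\eta(s,h(1,\cdot))$ produces some $h^{*}\in\Gamma$ with $\sup_Q I(h^{*}(1,\cdot))\le c-\varepsilon$, contradicting the definition of $c$. Therefore $c\ge\alpha$ is a critical value.

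The main obstacle is the deformation step: the field $W$ must simultaneously decrease $I$ (which forces it to follow $-\nabla I$ and thus to carry the non-compact linear part $e^{-sL}$), remain a compact perturbation of that linear flow (so that the degree argument underlying the linking survives the deformation and $h^{*}$ genuinely lies in $\Gamma$), and respect the weighted bound $(1+\|u\|)\|I'(u)\|$ of the Cerami condition rather than a uniform Palais--Smale bound (which requires the $(1+\|u\|)$-normalization of the pseudo-gradient and a separate argument, via $(I_4)$, ensuring the flow cannot escape to infinity in finite time). Verifying that $\Gamma$ is closed under this concatenation and that the linking pairing of $S$ with $\partial Q$ is a homotopy invariant within $\Gamma$ is the technical heart of the proof; once these are in place, the three bounds above close the argument.
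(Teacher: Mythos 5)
A preliminary remark: this paper does not prove Theorem \ref{ALT} at all; it is quoted as a black box from \cite{MS}. So your proposal can only be judged against the Benci--Rabinowitz-type scheme that \cite{MS} follows, and in that respect your outline (a minimax value over a class of maps of the form $e^{\theta L}u+\text{compact}$ fixing $\{I\le\omega\}\supset\partial Q$, the intersection property with $S$ via degree, and a Cerami-weighted deformation that stays inside the class) is the right family of arguments.

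There is, however, a genuine gap where you pass from $(I_4)$ to the Cerami compactness condition $(C)_c$: you obtain strong convergence of a bounded Cerami sequence by invoking ``in the application $0\notin\sigma_{ess}(A)$, so $L$ is Fredholm''. That is a property of the application, not a hypothesis of the abstract theorem; here $L_1,L_2$ are merely bounded self-adjoint, $0$ may lie in the essential spectrum of $L$, and a bounded Cerami sequence need not have a convergent subsequence. Consequently the key quantitative step of your contradiction argument (no critical point at level $c$ implies $(1+\|u\|)\,\|I'(u)\|\ge\delta$ on a strip $|I-c|\le\bar\varepsilon$) is unjustified as written. The gap is repairable, because strong convergence is not needed: if $(u_n)$ is a Cerami sequence at level $c$, then $(I_4)$ gives boundedness, $u_n\rightharpoonup u$ along a subsequence, and $(I_2)$ makes $\nabla B$ completely continuous on bounded sets, so $\nabla B(u_n)\to\nabla B(u)$ strongly and $Lu_n=\nabla I(u_n)-\nabla B(u_n)\to-\nabla B(u)$ strongly; since also $Lu_n\rightharpoonup Lu$, this forces $Lu=-\nabla B(u)$, i.e.\ $I'(u)=0$, and moreover $(Lu_n,u_n)\to(Lu,u)$ (strong convergence paired with weak convergence) while $B(u_n)\to B(u)$ by weak continuity, so $I(u)=\lim I(u_n)=c$. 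Thus the negation of ``$c$ is a critical value'' already yields the uniform weighted bound on the strip, and your deformation scheme can proceed without any Fredholm assumption. Finally, note that everything you label the ``technical heart'' --- the Cerami-weighted locally Lipschitz field with globally defined flow of the form $e^{\vartheta L}\cdot+\text{compact}$, the stability of $\Gamma$ under concatenation, and the homotopy invariance of the linking within $\Gamma$ --- is exactly where the actual proof in \cite{MS} lives; in your text these are asserted rather than established, so what you have is a correct strategy with one flawed step, not yet a proof.
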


\hspace{0.75cm}Since $V$ and $G$ are radial functions, in order to apply Theorem \ref{ALT}, it is convenient to define $E:=H^1_{rad}(\mathbb{R}^N)$, which is the Hilbert subspace of all radial symmetric functions in $H^1(\mathbb{R}^N)$ and consider $I:E\to \mathbb{R}$. In fact, functions in $E$ satisfy special properties that make true all necessary hypotheses on $I:E\to \mathbb{R}$, for example, recall that by Strauss \cite{S} (cf. also \cite{BeLi} Theorem A.I'.) $E$ is compactly embedded in $L^{\beta}(\mathbb{R}^N)$, for any $\beta \in (2,2^*)$.

\hspace{0.75cm}Defining $Q_A: E\to \mathbb{R}$ by
$$Q_A(u) := \displaystyle\int_{\mathbb{R}^N}|\nabla u(x)|^2dx + \int_{\mathbb{R}^N}V(x)u^2(x)dx = \dfrac{1}{2}(Au,u)_{L^2(\mathbb{R}^N)},$$
it \ is \ a \ continuous \ quadratic \ form on $E$. \ Since \ $E^0,E^-,E^+$ are \ the \ closed \ subspaces of $E$ \ on \ which \ $Q_A$ is null, negative \ and positive \ definite, \ then \ $E=E^0\oplus E^-\oplus E^+$.
Moreover, if $B_{Q_A}[u,v] =(Au,v)_{L^2(\mathbb{R}^N)}$ for all $u, \ v\in E$, 
is the bilinear form associated to $Q_A$ and $u, \ v$ belong to distinct such subspaces, then $B_{Q_A}[u,v]=0$ and $Q_A(u+v)=Q_A(u)+Q_A(v)$. In addition $E^0,E^-,E^+$ are mutually orthogonal in the $L^2(\mathbb{R}^N)$-inner product. Hence, for $u=u^0+u^++u^- \in E$, it is suitable to take as an equivalent norm in $E$ the expression
$$||u||^2 = ||u||^2_E:= ||u^0||_2^2+Q_A(u^+)-Q_A(u^-),$$
and the associated inner product, obtained by means of $B_{Q_A}[u,v]$, which makes $E$ a Hilbert space with $E^0, E^+,E^-$ orthogonal subspaces of $E$. In fact, by $(V_2)_r$ and Remark \ref{crr1}
for all $u^+ \in E^+$  and  for all $u^- \in E^-$ it yields
\begin{equation}\label{cre4}
\sigma^+||u^+||^2_2\leq \int_{\mathbb{R}^N}\Big(|\nabla u^+(x)|^2+V(x)(u^+(x))^2\Big)dx = ||u^+||^2,
\end{equation}
and
\begin{equation}\label{cre5}
-\sigma^-||u^-||^2_2\leq -\int_{\mathbb{R}^N}\Big(|\nabla u^-(x)|^2+V(x)(u^-(x))^2\Big)dx = ||u^-||^2,
\end{equation}
which ensures that the norm chosen above is equivalent to the standard norm in $H^1_{rad}(\mathbb{R}^N)$, once $E^0 = \ker(A)$ is finite dimensional.

\hspace{0.75cm}Observe that, $I(u)=Q_A(u)-\displaystyle\int_{\mathbb{R}^N}G(x,u(x))dx$, for all $u \in E$ and since $E$ is a subspace of $ H^1(\mathbb{R}^N)$,  $I \in C^1(E,\mathbb{R})$. Moreover, $I$ is indefinite on  $E$, henceforth the goal is to apply Theorem \ref{ALT} in order to get a critical point of $I$ restricted to $E$, and by applying the Principle of Symmetric Criticality (cf. \cite{Pal}) conclude the critical point is actually a critical point of $I:H^1(\mathbb{R}^N)\to \mathbb{R}$, namely a weak solution to $(P_r)$. Our main result is stated bellow.

\begin{theorem}\label{crt1}
	Suppose $(V_1)_r-(V_2)_r$ and $(g_1)-(g_4)$ hold. Then problem $(P_r)$ in (\ref{P_r}) possess a radial, nontrivial, weak solution in $H^1(\mathbb{R}^N)$.
\end{theorem}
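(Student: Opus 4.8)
The plan is to verify the four hypotheses of Theorem~\ref{ALT} for the functional $I$ on the Hilbert space $E = H^1_{rad}(\mathbb{R}^N)$ with the splitting $E_1 := E^+$ (or possibly $E^+\oplus E^0$ — one must decide where to put the kernel) and $E_2 := (E_1)^\perp$, and then invoke the Principle of Symmetric Criticality to promote the resulting critical point of $I|_E$ to a weak solution of $(P_r)$ in $H^1(\mathbb{R}^N)$. Hypothesis $(I_1)$ is essentially built in: with $L$ the operator that acts as the identity on $E^+$, minus the identity on $E^-$, and zero on $E^0$ (this is exactly $Q_A$ rewritten in the equivalent norm constructed in Section~2), one has $I(u) = \tfrac12(Lu,u) + B(u)$ with $B(u) := -\int_{\mathbb{R}^N} G(x,u)\,dx$. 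For $(I_2)$, weak continuity of $B$ follows from the compact embedding $E \hookrightarrow L^\beta(\mathbb{R}^N)$ for $\beta\in(2,2^*)$ (Strauss) together with the growth bound \eqref{cre2}, and uniform differentiability of $B$ on bounded sets follows from \eqref{cre1} by a standard argument. So the real work is $(I_3)$ — the linking geometry — and $(I_4)$ — boundedness of Cerami sequences; the paper itself flags these as the content of Sections~4 and~5.

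For the linking geometry $(I_3)$ I would proceed as follows. Choose $v \in E^+$, and set $S := \partial B_\rho(0) \cap (E^- \oplus E^0 \oplus \mathbb{R}v)^\perp$... no — more precisely, following the classical indefinite-functional linking (Rabinowitz, Kryszewski–Szulkin), take $S := \{u \in E^+ : \|u\| = \rho\}$ and $Q := \{ t v + w : w \in E^-\oplus E^0,\ \|w\| \le R,\ 0 \le t \le R\}$ for a fixed $v \in E^+$ with $\|v\|=1$. The lower bound $I \ge \alpha > 0$ on $S$ comes from \eqref{cre2}: on $E^+$ one has $I(u) \ge \tfrac12\|u\|^2 - \tfrac{\varepsilon}{2}C\|u\|_2^2 - \tfrac{C_\varepsilon}{p}C'\|u\|_p^p$, and choosing $\varepsilon$ small (absorbing into the $\sigma^+$-coercivity \eqref{cre4}) gives $I(u)\ge \tfrac14\|u\|^2 - C''\|u\|^p \ge \alpha$ for $\|u\|=\rho$ small. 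The upper bound $I \le \omega$ on $\partial Q$ is where the strict inequality in $(g_3)$, i.e. $a_0 > \sigma^+$, is essential: on $E^-\oplus E^0$ the quadratic part is $\le 0$ and $G \ge 0$, so $I \le 0$ there; on the part of $\partial Q$ with $t = R$ or $\|w\| = R$ large, one must use that $G(x,u)$ grows like $\tfrac12 h(x) u^2 \ge \tfrac12 a_0 u^2$ at infinity (from $(g_2)$–$(g_3)$) to beat the positive term $\tfrac12\|tv\|^2 = \tfrac12 t^2$; this forces a quantitative estimate showing $I(tv + w) \to -\infty$ as $\|tv+w\|\to\infty$ within $\mathbb{R}^+v \oplus E^-\oplus E^0$, using that $v$ is a \emph{fixed} function so $\int h v^2 > \sigma^+ \|v\|_2^2 \cdot$(something) dominates. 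The fact that $S$ and $\partial Q$ link is the standard topological lemma (a degree/intersection argument) once the dimension bookkeeping is right.

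The main obstacle I expect is $(I_4)$, the boundedness of Cerami sequences, because the nonlinearity is only asymptotically linear — there is no Ambrosetti–Rabinowitz condition and no monotonicity of $g(x,s)/s$, so the usual tricks for controlling $\|u_n\|$ fail. The strategy (Section~5) should be: suppose $\|u_n\|\to\infty$, normalize $z_n := u_n/\|u_n\|$, and split $z_n = z_n^+ + z_n^0 + z_n^-$. From $I'(u_n)(u_n^\pm)/\|u_n\| \to 0$ and the Cerami condition $(1+\|u_n\|)\|I'(u_n)\|\to 0$, one derives that $z_n$ converges (weakly, and strongly in $L^\beta_{loc}$ or via Strauss compactness in $L^\beta(\mathbb{R}^N)$ for $\beta\in(2,2^*)$) to some $z$. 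Dividing the equation $I'(u_n)\varphi = 0$ by $\|u_n\|$ and passing to the limit, using $g(x,u_n)/\|u_n\| = \frac{g(x,u_n)}{u_n} z_n \rightharpoonup h(x) z$ on the set $\{z \ne 0\}$ (this is the delicate point, requiring a splitting of $\mathbb{R}^N$ into where $u_n$ stays bounded versus blows up, à la Stuart–Zhou / Jeanjean), one concludes $A z = \mathcal{H} z$, i.e. $\mathcal{O} z = 0$, so $z \in \ker(\mathcal{O})$; by $(g_4)$, $\ker(\mathcal{O}) = \{0\}$, so $z = 0$. Then a separate argument (testing with $z_n^+ - z_n^-$ and using the coercivity \eqref{cre4}–\eqref{cre5} plus $G \ge 0$ and the sublinear-at-zero bound) must show $\|z_n\| \to 0$, contradicting $\|z_n\|=1$. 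Closing this contradiction cleanly — in particular handling the kernel $E^0$ and the region where $u_n$ does not go to infinity pointwise — is the technical heart of the proof; the radial compactness of Strauss is what makes it feasible without periodicity or a limit-at-infinity assumption on $V$.
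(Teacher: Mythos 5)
Your overall route is the same as the paper's: verify $(I_1)$--$(I_4)$ of Theorem~\ref{ALT} on $E=H^1_{rad}(\mathbb{R}^N)$ with $B(u)=-\int G(x,u)\,dx$, using Strauss compactness for $(I_2)$, a linking pair $(S,\partial Q)$ for $(I_3)$, a normalization argument plus $(g_4)$ for $(I_4)$, and finally the Principle of Symmetric Criticality. Your sketch of $(I_4)$ in particular matches the paper's Lemma~\ref{crl4} (test with $u_n^{\pm}$, pass to the limit against $\varphi\in C_0^\infty$ to get $\mathcal{O}\tilde u=0$, use $0\notin\sigma_p(\mathcal{O})$ to force $\tilde u=0$, then contradict $\|\tilde u_n\|=1$), with the kernel handled by its finite dimensionality.

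There is, however, one concrete gap in your treatment of $(I_3)$: you take ``a fixed $v\in E^+$ with $\|v\|=1$'' and then argue that $\int h v^2$ should dominate the quadratic term because $\int h v^2\geq a_0\|v\|_2^2>\sigma^+\|v\|_2^2$. That inequality holds for \emph{every} $v\in E^+$ and is not enough: what is needed is $\int_{\mathbb{R}^N} h(x)v^2\,dx > (Av,v)_{L^2}=\|v\|^2$, and for a generic unit vector of $E^+$ the Rayleigh quotient $(Av,v)/\|v\|_2^2$ can be arbitrarily large (in particular larger than $\|h\|_\infty$), in which case $I(tv+w)$ does \emph{not} tend to $-\infty$ along $\mathbb{R}^+v\oplus E_2$ and the upper bound on $\partial Q$ fails. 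The paper's fix, via Remark~\ref{crr1} and the spectral family of $A$, is to choose $e\in E^+$ with $(Ae,e)\leq(\sigma^+ +\varepsilon)\|e\|_2^2$ for $\varepsilon<a_0-\sigma^+$, so that \eqref{cre6} holds; this is exactly where the strict inequality in $(g_3)$ enters, and it is the missing selection step in your plan. The subsequent blow-up argument (normalizing $r_ne+u_n$, extracting a.e.\ limits, and comparing with $\int_{\Omega}h e^2$ on a bounded set) also needs $e$ chosen this way. A secondary point you left open must be settled as the paper does: the kernel $E^0$ has to go into $E_2=E^-\oplus E^0$, since $Q_A\equiv 0$ and $G\geq 0$ there give $I\leq 0$ on $E_2$, whereas the lower bound $I\geq\alpha$ on $S=\partial B_\rho\cap E_1$ relies on the coercivity \eqref{cre4}, which is available only on $E^+$.
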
	

\hspace{0.75cm}In order \ to show that $I$ satisfies \ $(I_1)$ \ of Theorem \ref{ALT}, set $E_1:=E^+$ \ and $E_2:=E^-\oplus E^0$, then it yields $E_2^{\perp}=E_1$. Now, define $L_i:E_i\to E_i,$ for all $u \in E_i$, as given by
$$(L_iu,v)_E=Q_A'(u)v=B_{Q_A}[u,v]= (Au,v)_{L^2(\mathbb{R}^N)},$$
for all $v \in E_i, \ i=1,2,$ where $Q_A'(u)v$ denotes Fréchet derivative of $Q_A$ at $u$ acting on $v$. Hence, $L = L_1 + L_2:E_1\oplus E_2 \to E_1\oplus E_2$ is a well defined, linear, bounded operator and satisfies $$Q_A(u)= \dfrac{1}{2}(Au,u)_{L^2(\mathbb{R}^N)}=\dfrac{1}{2}Q_A'(u)u=\dfrac{1}{2}B_{Q_A}[u,u]= \dfrac{1}{2}(Lu,u)_E.$$
Thus, setting 
${B(u):= -\displaystyle\int_{\mathbb{R}^N}G(x,u(x))dx},$
for all $u \in E,$ it is possible to write $$I(u)=\dfrac{1}{2}(Lu,u)+B(u),$$ satisfying $(I_1)$.

\section{The Weak Continuity and Uniform Differentiation of I}

\hspace{0.75cm}The following lemma is a variant of Strauss compactness lemma \cite{S} (see also Theorem A.I. in \cite{BeLi}) which is essential for the proof that $I$ satisfies $(I_2)$. This version applies to functions $P$ depending also on the space variable $x$. Assuming the dependence is uniform on $x$ as $|s|$ goes to zero and infinity, the proof follows with minor changes.

\begin{lemma}\label{crl0}
	Let $P:\mathbb{R}^N\times\mathbb{R}\to \mathbb{R}$ and $Q:\mathbb{R}\to \mathbb{R}$ be two continuous functions satisfying
	\begin{equation}\label{crl0e1}
	\dfrac{P(x,s)}{Q(s)}\to 0, \ \ \ \text{uniformly \ in \ } x \ \text{as} \ |s|\to +\infty.
	\end{equation}
	Let $(u_n)$ be a sequence of measurable functions from $\mathbb{R}^N$ to $\mathbb{R}$ such that
	\begin{equation}\label{crl0e2}
	\sup_n\int_{\mathbb{R}^N}|Q(u_n(x))|dx <+\infty,
	\end{equation}
	and
	\begin{equation}\label{crl0e3}
	P(x,u_n(x)) \to v(x) \text{ \ a. \ e. \ in \ } \mathbb{R}^N,
	\end{equation}
	as $n \to +\infty.$ Then for any bounded Borel set $\mathcal{B}$ one has
	\begin{equation}\label{crl0e4}
	\int_{\mathcal{B}} |P(x,u_n(x))-v(x)|dx \to 0,
	\end{equation}
	as $n \to +\infty.$ If one further assumes that
	\begin{equation}\label{crl0e5}
	\dfrac{P(x,s)}{Q(s)} \to 0, \ \ \ \text{uniformly \ in \ } x \ \text{as} \ s \to 0,
	\end{equation}
	and 
	\begin{equation}\label{crl0e6}
	u_n(x)\to 0 \text{ \ as \ } |x|\to +\infty, \text{ \ uniformly \ with \ respect \ to \ } n,
	\end{equation}
	then $P(\cdot,u_n(\cdot))$ converges to $v$ in $L^1(\mathbb{R}^N)$ as $n\to +\infty$.
\end{lemma}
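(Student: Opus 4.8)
The plan is to follow the classical argument of Strauss (as reworked in Berestycki--Lions, Theorem A.I) and check that the two uses of uniformity in $x$ are exactly what is needed to push that argument through. I would split the proof into two parts: first the convergence on bounded Borel sets $\mathcal{B}$ under \eqref{crl0e1}--\eqref{crl0e3}, and then the upgrade to $L^1(\mathbb{R}^N)$ under the additional hypotheses \eqref{crl0e5}--\eqref{crl0e6}.

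For the first part, fix a bounded Borel set $\mathcal{B}$ and let $\varepsilon>0$. Using \eqref{crl0e1}, choose $M>0$ so that $|P(x,s)|\le \varepsilon\,|Q(s)|$ whenever $|s|\ge M$ and for \emph{all} $x$ (this is where uniformity in $x$ is used). On the set where $|u_n(x)|<M$, the values $P(x,u_n(x))$ stay in a compact region; combined with the a.e.\ convergence \eqref{crl0e3} and Egorov's theorem on the finite-measure set $\mathcal{B}$, one controls $\int_{\mathcal{B}\cap\{|u_n|<M\}}|P(x,u_n(x))-v(x)|\,dx$. On the set $\{|u_n|\ge M\}$ one estimates $\int_{\mathcal{B}\cap\{|u_n|\ge M\}}|P(x,u_n(x))|\,dx \le \varepsilon \sup_n\int_{\mathbb{R}^N}|Q(u_n(x))|\,dx$, which is $O(\varepsilon)$ by \eqref{crl0e2}. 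The limit function $v$ itself satisfies the analogous bound a.e.\ (since $P(x,u_n(x))\to v(x)$ forces $|v(x)|\le \varepsilon|Q(u_n(x))|$ in the limit on $\{|u_n|\ge M\}$, or more carefully one argues $v$ inherits an integrable majorant via Fatou), so $\int_{\mathcal{B}}|v|$ over the relevant piece is also $O(\varepsilon)$. Letting $n\to\infty$ and then $\varepsilon\to 0$ gives \eqref{crl0e4}.

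For the second part, I would write $\int_{\mathbb{R}^N}|P(x,u_n(x))-v(x)|\,dx = \int_{B_R}(\cdots) + \int_{\mathbb{R}^N\setminus B_R}(\cdots)$ for a large ball $B_R$. The inner integral tends to $0$ by the first part (with $\mathcal{B}=B_R$). For the tail, use \eqref{crl0e6}: given $\varepsilon>0$ choose $R$ so that $|u_n(x)|<\delta$ for all $|x|\ge R$ and all $n$, where $\delta$ is picked via \eqref{crl0e5} so that $|P(x,s)|\le \varepsilon|Q(s)|$ for $|s|<\delta$ and all $x$. But I also need $Q(u_n)$ integrable on the tail with small mass, or rather I need $P(x,u_n(x))$ itself small in $L^1$ there — here one should combine $|P(x,u_n(x))|\le \varepsilon|Q(u_n(x))|$ with \eqref{crl0e2}, noting $\int_{\mathbb{R}^N\setminus B_R}|Q(u_n)| \le \sup_n\int_{\mathbb{R}^N}|Q(u_n)| <\infty$, so the tail of $P(\cdot,u_n(\cdot))$ is $O(\varepsilon)$ uniformly in $n$; and $v$, being the a.e.\ limit, satisfies $|v(x)|\le \varepsilon|Q(u_n(x))|$ for $|x|\ge R$ as well, hence $\int_{\mathbb{R}^N\setminus B_R}|v|$ is controlled by a Fatou argument. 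Combining, $\limsup_n\int_{\mathbb{R}^N}|P(x,u_n(x))-v(x)|\,dx = O(\varepsilon)$, and $\varepsilon\to 0$ finishes the proof.

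The main obstacle is the bookkeeping of the limit function $v$: one never assumes anything about $v$ directly, so every bound on $\int|v|$ must be extracted from the a.e.\ convergence together with the uniform bounds on $P(x,u_n)$ in terms of $|Q(u_n)|$, typically via Fatou's lemma applied to $|v(x)| = \lim_n |P(x,u_n(x))| \le \varepsilon \liminf_n |Q(u_n(x))|$ on the appropriate sets. Once this is handled cleanly, everything else is the standard Strauss splitting into $\{|u_n|<M\}$ and $\{|u_n|\ge M\}$, and the only genuinely new point relative to the $x$-independent case is that the thresholds $M$ and $\delta$ can be chosen independently of $x$ precisely because of the uniformity hypotheses \eqref{crl0e1} and \eqref{crl0e5} — so the proof of Berestycki--Lions goes through verbatim with $P(u_n(x))$ replaced by $P(x,u_n(x))$.
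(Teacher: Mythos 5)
Your proposal is correct and follows essentially the same route as the paper's proof: the Strauss splitting into the region where $|u_n|$ is large (handled by the uniform-in-$x$ hypothesis \eqref{crl0e1} together with the bound \eqref{crl0e2}) and the bounded region (handled by a.e.\ convergence plus dominated convergence/Egorov), with Fatou's lemma supplying the integrability of $v$, and the $L^1(\mathbb{R}^N)$ upgrade obtained exactly as in the paper by combining \eqref{crl0e5} and \eqref{crl0e6} on the exterior of a large ball with the first part on the ball itself.
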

\begin{proof} In order to prove the first part of the proposition, it is sufficient to show that $P(x,u_n(x))$ is uniformly integrable on $\mathcal{B}$. In fact, if this is the case, due to (\ref{crl0e3}) $$\int_{\mathcal{B}\cap\{|P(x,u_n(x))|\leq R\}}|P(x,u_n(x))-v(x)|dx \to 0,$$ as $n\to +\infty$, by applying Lebesgue Dominated Convergence Theorem, and the integral $$\int_{\mathcal{B}\cap\{|P(x,u_n(x))| > R\}}|P(x,u_n(x))|dx,$$ is controlled by uniform integration. By condition (\ref{crl0e1}) there exists $C>0$ such that $$|P(x,u_n(x))|\leq C(1 + |Q(u_n(x))|), \ x \in \mathbb{R}^N.$$
	Thus, in view of (\ref{crl0e2}) and Fatou's Lemma, it follows that $P(\cdot,u_n(\cdot))$ and $v$ are in $L^1(\mathcal{B})$. Moreover, since $P$ is continuous, it maps compacts sets on compact sets, hence fixed $R>0$, if for some $x\in \mathbb{R}$, $|P(x,u_n(x))| > R$, there exists $M= M(R)>0$, such that $|u_n(x)|> M(R)$ and $M(R)\to +\infty$ as $R\to +\infty$. Then
	$$\int_{\mathcal{B}\cap\{|P(x,u_n(x))|> R\}}|P(x,u_n(x))|dx \leq \int_{\mathcal{B}\cap\{|u_n(x)|> M(R)\}}|P(x,u_n(x))|dx.$$
	Applying condition (\ref{crl0e1}), given $\varepsilon>0$ there exist $M(R)>0$, such that $|u_n(x)|\geq M(R)$ implies
	$|P(x,u_n(x))|\leq \varepsilon|Q(u_n(x))|$ and $\varepsilon=\varepsilon(R) \to 0$ as $M(R)\to +\infty$. Then, there exist $\tilde{C}>0$ such that
	\begin{eqnarray*}
		\int_{\mathcal{B}\cap\{|P(x,u_n(x))|> R\}}|P(x,u_n(x))|dx &\leq& \int_{\mathcal{B}\cap\{|u_n(x)| > M(R)\}}|P(x,u_n(x))|dx\\
		&\leq& \varepsilon(R)\int_\mathcal{B}|Q(u_n(x))|dx\\
		&\leq& \tilde{C}\varepsilon(R),
	\end{eqnarray*}
	which shows the uniform integrability and ensures the result.
	
	\hspace{0.75cm}For the \ second part, that \ $P(\cdot,u_n(\cdot))$ \ converges to $v$ \ in $L^1(\mathbb{R}^N)$ \ as  $n\to +\infty$, \ note that \ in virtue of \ (\ref{crl0e5}) \ given $\varepsilon>0$ \ there exists $\delta>0$ such that $|s|\leq \delta$ implies $|P(x,s)|\leq \varepsilon|Q(s)|$, uniformly in $x$. Moreover, by (\ref{crl0e6}) given $\delta>0$ there exists $R_0>0$ such that $|u_n(x)|\leq \delta$ for all $|x|\geq R_0$, uniformly in $n$. Thus, $|x|\geq R_0$ implies $|P(x,u_n(x))|\leq \varepsilon|Q(u_n(x))|$, uniformly in $n$. Therefore, by Fatou's Lemma $v \in L^1(\mathbb{R}^N)$ and
	$$\int_{\{|x|\geq R_0\}}|v(x)|dx \leq \liminf_{n \to\infty}\int_{\{|x|\geq R_0\}}|P(x,u_n(x))|dx \leq \tilde{C}\varepsilon.$$ In addition, from the first part, there exists $n_0\in \mathbb{N}$ such that for $n\geq n_0$ 
	$$\int_{\{|x|<R_0\}}|P(x,u_n(x))-v(x)|dx\leq \varepsilon.$$
	Hence, for $n\geq n_0$ it yields
	$$\int_{\mathbb{R}^N}|P(x,u_n(x))-v(x)|dx \leq (2\tilde{C}+1)\varepsilon,$$
	which gives the result.
\end{proof}	

\hspace{0.75cm}By means of the previous lemma, next result holds.

\begin{lemma}\label{crl1}
	If $g$ satisfies $(g_1)-(g_2)$, then $B$ is weakly continuous.
\end{lemma}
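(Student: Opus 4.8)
The plan is to show that if $u_n \rightharpoonup u$ weakly in $E = H^1_{rad}(\mathbb{R}^N)$, then $B(u_n) = -\int_{\mathbb{R}^N} G(x, u_n(x))\,dx \to B(u) = -\int_{\mathbb{R}^N} G(x, u(x))\,dx$. First I would record the standard consequences of weak convergence: the sequence $(u_n)$ is bounded in $H^1_{rad}(\mathbb{R}^N)$, hence bounded in $L^2(\mathbb{R}^N)$ and in $L^{2^*}(\mathbb{R}^N)$, and by the Strauss compact embedding $E \hookrightarrow\hookrightarrow L^\beta(\mathbb{R}^N)$ for $\beta \in (2, 2^*)$ we may, after passing to a subsequence, assume $u_n \to u$ strongly in such an $L^\beta$ and $u_n(x) \to u(x)$ a.e. in $\mathbb{R}^N$. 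Also I would invoke the radial lemma of Strauss: radial $H^1$ functions satisfy a pointwise decay bound $|u_n(x)| \le C\|u_n\|_{H^1} |x|^{(1-N)/2}$ for $|x|$ large, so that $u_n(x) \to 0$ as $|x| \to \infty$ uniformly in $n$, which supplies hypothesis \eqref{crl0e6} of Lemma \ref{crl0}.

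The key step is to apply Lemma \ref{crl0} with $P(x,s) := G(x,s)$ and an appropriate $Q$. From Remark \ref{crr2}, estimate \eqref{cre2} gives $|G(x,s)| \le \frac{\varepsilon}{2}|s|^2 + \frac{C_\varepsilon}{p}|s|^p$ for any $p \in [2, 2^*]$; fixing some $p \in (2, 2^*)$ and taking, say, $Q(s) := |s|^2 + |s|^p$, the growth bound \eqref{cre1}–\eqref{cre2} together with $(g_1)$ (so that $G(x,s)/s^2 \to 0$ as $s \to 0$ uniformly) and $(g_2)$ (so that $g(x,s)/s$, hence $G(x,s)/s^2$, stays bounded, giving $G(x,s)/|s|^p \to 0$ as $|s| \to \infty$ uniformly since $p > 2$) yields both \eqref{crl0e1} and \eqref{crl0e5}. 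Condition \eqref{crl0e2}, namely $\sup_n \int_{\mathbb{R}^N} |Q(u_n)|\,dx < \infty$, follows from the uniform bound on $\|u_n\|_{L^2}$ and $\|u_n\|_{L^p}$ (the latter from $\|u_n\|_{L^{2^*}} \le C$ and $\|u_n\|_{L^2} \le C$ via interpolation, or directly from Strauss compactness giving $L^p$ convergence hence boundedness). Condition \eqref{crl0e3}, that $G(x, u_n(x)) \to G(x, u(x))$ a.e., is immediate from the a.e. convergence $u_n(x) \to u(x)$ and the continuity of $G$ in $s$. With \eqref{crl0e1}, \eqref{crl0e2}, \eqref{crl0e3}, \eqref{crl0e5}, \eqref{crl0e6} all in hand, Lemma \ref{crl0} delivers $G(\cdot, u_n(\cdot)) \to G(\cdot, u(\cdot))$ in $L^1(\mathbb{R}^N)$, hence $\int_{\mathbb{R}^N} G(x,u_n)\,dx \to \int_{\mathbb{R}^N} G(x,u)\,dx$, i.e. $B(u_n) \to B(u)$.

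I would finish with the usual subsequence argument: the limit of $B(u_n)$ is uniquely determined, so since every subsequence of $(B(u_n))$ has a further subsequence converging to $B(u)$, the whole sequence converges to $B(u)$, establishing weak continuity of $B$ on all of $E$.

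The main obstacle is the verification that Lemma \ref{crl0} genuinely applies — specifically that a single choice of $Q$ simultaneously makes $G$ negligible relative to $Q$ both at $s \to 0$ (requiring the $|s|^2$ term, via $(g_1)$) and at $|s| \to \infty$ (requiring the $|s|^p$ term with $p > 2$, which works precisely because $(g_2)$ gives only \emph{linear} growth of $g$, hence quadratic growth of $G$, strictly below the power $p$), while still keeping $\sup_n \int |Q(u_n)| < \infty$; this is exactly where the asymptotically linear hypothesis and the Strauss embedding must be combined carefully, and it is the only non-routine part of the argument. The uniform decay \eqref{crl0e6} from the radial lemma is the other ingredient that must not be overlooked, but it is standard for $H^1_{rad}(\mathbb{R}^N)$.
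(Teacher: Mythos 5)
Your argument is correct and follows essentially the same route as the paper's proof: both apply Lemma \ref{crl0} with $P(x,s)=G(x,s)$ and $Q(s)=|s|^2+|s|^p$ for some $p\in(2,2^*)$, verifying \eqref{crl0e1}, \eqref{crl0e5} from $(g_1)$--$(g_2)$, \eqref{crl0e2} from the embeddings, \eqref{crl0e3} from the compact Strauss embedding and a.e.\ convergence, and \eqref{crl0e6} from the radial decay lemma. Your closing subsequence-uniqueness step is a small (and harmless) extra precaution that the paper leaves implicit.
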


\begin{proof}Let \ $(u_n)\in E$ \ \ and \ suppose \ $u_n \rightharpoonup u$ in \ $E$, then $(u_n)$ is bounded in $E$. Due to $(g_1)-(g_2)$, for $2<p<2^*$ one has
	\begin{equation}\label{crl1e1}
	\lim_{s \to 0} \dfrac{G(x,s)}{|s|^2} = 0 \text{ \ and \ } \lim_{|s| \to +\infty} \dfrac{G(x,s)}{|s|^p} = 0,
	\end{equation}
	uniformly in $x$. Hence, choosing $Q(s)= |s|^2+ |s|^p$, and $P(\cdot,s)= G(\cdot,s)$, it is possible to apply Lemma \ref{crl0}. Indeed, in view of (\ref{crl1e1}) it follows that
	\begin{equation}\label{crl1e2}
	\lim_{s \to 0} \dfrac{G(x,s)}{|s|^2+|s|^p} = 0 \text{ \ and \ }
	\lim_{|s| \to +\infty}  \dfrac{G(x,s)}{|s|^2+|s|^p} = 0,
	\end{equation}
	uniformly in $x$. Then $P$ and $Q$ satisfy (\ref{crl0e1}) and (\ref{crl0e5}). Moreover, 
	\begin{equation}\label{crl1e3}
	\sup_n\int_{\mathbb{R}^N}\Big(|u_n(x)|^2 + |u_n(x)|^p\Big)dx = \sup_n\Big(||u_n||_2^2+||u_n||_p^p\Big) \leq C < +\infty,
	\end{equation}
	since $(u_n)$ is bounded in $E$ and $E$ is continuously embedded in $L^2(\mathbb{R}^N)$ and $\ L^p(\mathbb{R}^N)$. Hence (\ref{crl0e2}) is satisfied. Provided that $u_n \rightharpoonup u$ in $E$ and $E$ is compactly embedded in $L^p(\mathbb{R}^N)$, $u_n \to u$ in $L^p(\mathbb{R}^N)$ and $u_n(x) \to u(x)$ almost everywhere in $\mathbb{R}^N$. Thus, choosing $v(x)= G(x,u(x))$ \ it follows that \ (\ref{crl0e3}) \ is satisfied. \ Finally, \ since $(u_n)\subset H^1_{rad}(\mathbb{R}^N)$ and $u_n(x)\to u(x)$ \ almost everywhere \ in $\mathbb{R}^N$, it yields \ $\displaystyle\lim_{|x| \to +\infty}u_n(x)=0$, \ uniformly with \ respect to $n$ (cf. \cite{BeLi} Lemma A.II.). \ Therefore, applying Lemma \ref{crl0} it yields \ $ G(\cdot,u_n(\cdot)) = P(\cdot,u_n(\cdot)) \to v = G(\cdot,u(\cdot))$ in $L^1(\mathbb{R}^N)$ as $n \to +\infty$, namely,
	$$B(u_n) = - \int_{\mathbb{R}^N}G(x,u_n(x))dx \to - \int_{\mathbb{R}^N}G(x,u_n(x)) = B(u),$$
	as $n \to +\infty$ and then $B$ is weakly continuous.
\end{proof}

\begin{lemma}\label{crl2}
	Assume that $g$ satisfies $(g_1)-(g_2)$, then $B$ is uniformly differentiable on bounded sets of $E$.
\end{lemma}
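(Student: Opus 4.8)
The goal is to show: given a bounded set $\mathcal{B}_R = \{u \in E : \|u\| \le R\}$ and $\varepsilon > 0$, there is $\delta > 0$ such that for all $u \in \mathcal{B}_R$ and all $v \in E$ with $\|v\| \le \delta$,
$$
\bigl| B(u+v) - B(u) - B'(u)v \bigr| \le \varepsilon \|v\|,
$$
where $B'(u)v = -\int_{\mathbb{R}^N} g(x,u(x))v(x)\,dx$. The natural starting point is the mean value / fundamental theorem of calculus representation
$$
B(u+v) - B(u) - B'(u)v = -\int_{\mathbb{R}^N} \int_0^1 \bigl[ g(x, u(x)+tv(x)) - g(x,u(x)) \bigr] v(x)\, dt\, dx.
$$
So the plan is to estimate $\int_{\mathbb{R}^N} \sup_{t\in[0,1]} |g(x,u(x)+tv(x)) - g(x,u(x))|\,|v(x)|\,dx$ and show it is $o(\|v\|)$ uniformly over $u \in \mathcal{B}_R$.

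First I would exploit the growth bound from Remark \ref{crr2}: fixing some $p \in (2,2^*)$ and $\varepsilon_0 > 0$, inequality (\ref{cre1}) gives $|g(x,s)| \le \varepsilon_0|s| + C_{\varepsilon_0}|s|^{p-1}$, hence the integrand above is dominated by a sum of terms of the form $|u|^{a}|v|^{b}$ and $|v|^{p}$, all integrable by Sobolev and H\"older with exponents $2$ and $p$, using that $\|u\| \le R$. This shows $B'(u)$ is the Fréchet derivative and gives a crude bound; the point now is uniformity. I would split $\mathbb{R}^N = \{|x| \le \rho\} \cup \{|x| > \rho\}$. On the exterior region, using the radial decay (Strauss: functions in $E$ with norm $\le R$ satisfy $|w(x)| \le C_R |x|^{(1-N)/2}$), both $|u(x)|$ and $|v(x)|$ are uniformly small for $|x|$ large; combined with $(g_1)$ (so $g(x,s)/s \to 0$ as $s\to 0$ uniformly in $x$) and the $L^2$-control $\|v\|_2 \le C\|v\|$, the exterior integral is $\le \varepsilon \|v\|/2$ once $\rho$ is large, uniformly in $u \in \mathcal{B}_R$ and all small $v$. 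On the bounded region $\{|x|\le\rho\}$, I would use continuity of $g$ and a compactness/uniform-continuity argument: since $\|v\|_\infty$ need not be small I instead use that $g$ is uniformly continuous on $\{|x|\le\rho\} \times [-K,K]$ only on the set where $u,v$ are bounded, and handle the set where $|u|$ or $|v|$ is large via the superlinear growth and H\"older (that part contributes a factor $\|v\|^{p-1}$ or a small-measure factor), again getting $\le \varepsilon\|v\|/2$ for $\|v\| \le \delta$.

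The main obstacle is the interior estimate: one cannot make $g(x,u+tv)-g(x,u)$ pointwise small merely from $\|v\|$ small, because $v$ can be large on a small set. The cleanest fix is a truncation argument: write $v = v\chi_{\{|v|\le M\}} + v\chi_{\{|v|>M\}}$ and similarly truncate $u$; on the "both bounded" part use uniform continuity of $g$ on the compact box (choosing $M$ first, then $\delta$), and bound the remaining "large" parts using (\ref{cre1}), H\"older with exponents $p/(p-1)$ and $p$, and the fact that $\|v\chi_{\{|v|>M\}}\|_p \to 0$ as the measure of $\{|v|>M\}$ shrinks uniformly for $\|v\|$ small (Chebyshev), while $\|u\chi_{\{|u|>M\}}\|_p \le$ small uniformly over $\mathcal{B}_R$ by equi-integrability of the bounded family in $L^p$. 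Assembling the exterior and interior bounds gives the claimed uniform estimate, and the same computation identifies $B'(u)v = -\int g(x,u)v\,dx$, so $B \in C^1$ with the stated derivative; this is exactly what $(I_2)$ of Theorem \ref{ALT} requires.
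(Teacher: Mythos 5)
Your strategy is sound and genuinely different from the paper's. The paper argues by contradiction: assuming uniform differentiability fails on some ball $B_{R_0}$, it produces sequences $u_n, v_n$ with $\|v_n\|\le 1/n$, reduces the remainder via the Mean Value Theorem and Cauchy--Schwarz to showing $\|g(\cdot,u_n+\theta_n v_n)-g(\cdot,u_n)\|_{L^2(\mathbb{R}^N)}\to 0$, and proves this by combining a.e.\ convergence and dominated convergence on balls $B_r(0)$ (with dominating functions extracted from $L^2$-convergent subsequences) with the uniform radial decay of bounded sequences in $H^1_{rad}$ outside a large ball together with $(g_1)$. You propose instead a direct, sequence-free $\varepsilon$--$\delta$ estimate: the exterior ingredient is the same (Strauss decay uniform on bounded sets plus $(g_1)$), but the interior is handled by truncation, Chebyshev, H\"older and equi-integrability of the bounded family (via the $L^{2^*}$ bound). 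This buys a quantitative and arguably cleaner argument that avoids subsequence extraction and dominated convergence, at the price of a more delicate bookkeeping of thresholds.

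There is, however, one interior step that fails as written: on the set where $|u|\le M$ and $|v|\le M$, uniform continuity of $g$ on $\overline{B_\rho}\times[-2M,2M]$ does \emph{not} make $g(x,u+tv)-g(x,u)$ small, because the increment $tv$ may be of size $M$ there; taking $\|v\|\le\delta$ small controls the $E$-norm but not $\|v\|_\infty$ (and $E$ does not embed in $L^\infty$ for $N\ge 3$), which is exactly the obstacle you yourself identified. The repair is the Chebyshev device you already invoke, applied at a second, small threshold: having fixed $\rho$ and $M$, choose $\kappa>0$ so small that the modulus of continuity of $g$ on the compact box at scale $\kappa$ is below the target; on $\{|v|\le\kappa\}$ conclude directly, while on $\{\kappa<|v|\le M\}$ use that $g$ is bounded on the box and that $|\{|v|>\kappa\}|\le C\delta^2/\kappa^2$, so Cauchy--Schwarz gives a contribution of order $C_M(\delta/\kappa)\|v\|$. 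With that insertion (choose $\rho$, then $M$, then $\kappa$, then $\delta$) your plan closes; the remaining estimates -- the growth bound (\ref{cre1}), H\"older with exponents $p/(p-1)$ and $p$, and interpolation with $L^{2^*}$ giving the uniform smallness of $\int_{\{|u|>M\}}|u|^p$ over $\|u\|\le R$ -- are correct and suffice for the large-value sets.
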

\begin{proof}
	First, note that fixed $R>0$ and given $u+v, \ v \in B_R \subset E$, the closed ball centered on the origin, one has
		\begin{eqnarray}\label{crl2e1}
		&&|B(u+v)-B(u)-B'(u)v| \nonumber\\ &=&\int_{\mathbb{R}^N}\big|G\big(x,u(x)+v(x)\big)- G\big(x,u(x)\big)- g\big(x,u(x)\big)v(x)\big|dx  \nonumber\\
		&\leq& \int_{\mathbb{R}^N}\big|g\big(x,z(x)\big)-g\big(x,u(x)\big)\big|\ |v(x)|dx \nonumber\\
		&\leq& C_2||\xi||_{L^2(\mathbb{R}^N)}||v||,
		\end{eqnarray}
	where $\xi(x):= |g(x ,z(x))-g(x,u(x))|$ and $z(x)= u(x)+\theta(x)v(x)$, with $0\leq\theta(x)\leq 1$ given by Mean Value Theorem and $C_2>0$ is the constant given by the continuous \ embedding $ E \hookrightarrow L^2(\mathbb{R}^N)$.
	
	\hspace{0.75cm}In order to prove that $B$ is uniformly differentiable on bounded sets of $E$, given $\varepsilon>0$ it is sufficient to show there exist $\delta>0$ such that $C_2||\xi||_{L^2(\mathbb{R}^N)}\leq \varepsilon$ for all $u+v, \ v \in B_R$ with $||v||\leq \delta$. Seeking a contradiction, suppose that it is not the case, then there exist $R_0, \varepsilon_0>0$ such that for all $\delta>0$ there are $u_{\delta}+v_{\delta}, v_{\delta} \in B_{R_0}$ with $||v_{\delta}||\leq \delta$ and $C_2||\xi||_{L^2(\mathbb{R}^N)} > \varepsilon_0$. Thus, it is possible to obtain for all $n \in \mathbb{N}$ and $\delta= \dfrac{1}{n}$ functions $u_n+v_n, \ v_n \in B_{R_0}$ such that $||v_n||\leq \dfrac{1}{n}$ and $ C_2||\xi_n||_{L^2(\mathbb{R}^N)}> \varepsilon_0$, for $\xi_n(x)= |g(x ,z_n(x))-g(x,u_n(x))|$, with $z_n = u_n +\theta_nv_n$, and $0\leq \theta_n\leq 1$ depending on $u_n$ and $v_n$ as before. Since $v_n \to 0 $ in $E$, then $v_n \to 0$ in $L^2(\mathbb{R}^N)$, $v_n(x)\to 0$ almost everywhere in $\mathbb{R}^N$ and there exists $\psi \in L^2(\mathbb{R}^N)$ such that $|v_n(x)|\leq \psi(x)$ almost everywhere in $\mathbb{R}^N$. Furthermore, since $(u_n)\subset B_{R_0}$, it is bounded in $E$, then $u_n \rightharpoonup u$ in $E$  up to subsequences, then $u_n \to u$ in  $L^2_{loc}(\mathbb{R}^N)$ up to subsequences, hence $u_n(x)\to u(x)$ almost everywhere in $\mathbb{R}^N$ and fixed $B_r(0) \subset \mathbb{R}^N$ there exists $\varphi_r \in L^2(B_r(0))$ such that $|u_n(x)|\leq \varphi_r(x)$ almost everywhere in $B_r(0)$ up to subsequences. In addition, $z_n(x) \rightharpoonup u$ in $E$   up to subsequences, then $z_n \to u$ in  $L^2_{loc}(\mathbb{R}^N)$ up to subsequences, hence $z_n(x)\to u(x)$ almost everywhere in  $\mathbb{R}^N$, which implies that
	$\xi_n(x) \to 0,$
	almost everywhere in $\mathbb{R}^N$, provided that $g$ is continuous.   Moreover, in view of Remark \ref{crr2} with $p=2$, it yields
	\begin{eqnarray}\label{crl2e2}
	|\xi_n(x)|^2&\leq&2\Big[\big|g(x,z_n(x))\big|^2+\big|g(x,u_n(x))\big|^2\Big]\nonumber\\
	&\leq& 2\Big[C^2|z_n(x)|^2 + C^2|u_n(x)|^2\Big] \nonumber\\
	&\leq&2C^2\Big[2\big(|u_n(x)|^2+|v_n(x)|^2\big)+|u_n(x)|^2\Big] \nonumber\\
	&\leq& 2C^2\Big[3|u_n(x)|^2+ 2|v_n(x)|^2\Big] \nonumber\\
	&\leq& 6C^2\Big[ \varphi_r^2(x)+\psi^2(x)\Big],
	\end{eqnarray}
	almost everywhere in  $B_r(0)$. Since $\varphi_r^2 +\psi^2 \in L^1(B_r(0))$, applying Lebesgue Dominated Convergence Theorem, it yields
	\begin{equation}\label{crl2e3}
	\int_{B_r(0)}|\xi_n(x)|^2dx \to 0,
	\end{equation}
	as $n \to +\infty$.	On the other hand, since $(z_n)\subset H^1_{rad}(\mathbb{R}^N)$ and $ (u_n)\subset H^1_{rad}(\mathbb{R}^N)$ are bounded sequences, it follows that 
	$$\displaystyle\lim_{|x| \to +\infty}z_n(x)= \displaystyle\lim_{|x| \to +\infty}u_n(x)= 0,$$
	uniformly with respect to $n$, by the characterization of decay of radial functions (cf. for instance \cite{BeLi} Radial Lemma A.II). Hence, given $\varsigma>0$, there exists $r>0$ such that $|x|\geq r$ implies  $|z_n(x)|, \ |u_n(x)|\leq \varsigma$ for all $n \in \mathbb{N}$. Moreover, given $\vartheta >0$ by $(g_1)$ there exists $\varsigma >0$ sufficiently small such that $|g(x,s)| \leq \vartheta|s|$ for all $|s|\leq \varsigma$. Hence, for $r>0$ sufficiently large, it yields
	$$|g(x,z_n(x))|\leq \vartheta|z_n(x)| \text{ \ and \ }
	|g(x,u_n(x))| \leq \vartheta|u_n(x)|,$$
	for all $|x|\geq r$ and  since $(z_n)$ and $(u_n)$ are bounded in $L^2(\mathbb{R}^N)$, it yields
	\begin{eqnarray}\label{crl2e4}
	\int_{\mathbb{R}^N \backslash B_r(0)}|\xi_n(x)|^2dx &\leq& 2\int_{\mathbb{R}^N \backslash B_r(0)} \Big[\big|g(x,z_n(x))\big|^2+\big|g(x,u_n(x))\big|^2\Big]dx\nonumber\\
	&\leq& 2\vartheta\int_{\mathbb{R}^N \backslash B_r(0)}\Big(|z_n(x)|^2+|u_n(x)|^2\Big)dx\nonumber \\
	&\leq&2\vartheta\sup_n\left(||z_n||_2^2+||u_n||_2^2\right) \nonumber\\
	&\leq& C\vartheta \nonumber\\
	&<&\dfrac{1}{2}\left(\dfrac{\varepsilon_0}{C_2}\right)^2,
	\end{eqnarray}
	for $\vartheta$ sufficiently small. Therefore, combining (\ref{crl2e3}) and (\ref{crl2e4}) it follows that as $n \to +\infty$
	$$\left(\dfrac{\varepsilon_0}{C_2}\right)^2<||\xi_n||^2_{L^2(\mathbb{R}^N)}=\int_{ \mathbb{R}^N}|\xi_n(x)|^2dx \leq o_n(1) + \dfrac{1}{2}\left(\dfrac{\varepsilon_0}{C_2}\right)^2.$$
	Thus, passing to the limit as $n \to +\infty$ it yields a contradiction. Therefore, the result holds.
\end{proof}

\section{The Linking Geometry}

\hspace{0.75cm}For the linking geometry, set $S:= (\partial B_{\rho}\cap E_1)$ and
$${Q:= \{re+u_2: r\geq0, u_2 \in E_2, ||re+u_2||\leq r_1\}},$$
where $0<\rho < r_1$ are constants and $e\in E_1, ||e||=1$, is chosen suitably. Indeed, due to the strict inequality in  $(g_3)$ and from Remark \ref{crr1},  it is possible to choose  $e \in E_1$ a unitary vector given by the spectral family of operator $A$ and $\varepsilon>0$ small enough satisfying
\begin{eqnarray} \label{cre6}
1&=&||e||^2=Q_A(e) = \dfrac{1}{2}(Ae,e)_{L^2(\mathbb{R}^N)} \nonumber\\
&\leq& \dfrac{1}{2}(\sigma^+ + \varepsilon )||e||^2_2\nonumber\\
&<&\dfrac{1}{2}a_0||e||^2_2 \nonumber\\
&\leq& \dfrac{1}{2}\int_{\mathbb{R}^N}h(x)e^2(x) \ dx.
\end{eqnarray}
Choosing such an $e$, by means of (\ref{cre6}) it is possible to show that for sufficiently large $r_1>0$,  $I|_S\geq \alpha >0$ and $I|_{\partial Q}\leq 0$ hold, for some $\alpha>0$. Moreover,
$S$ and $Q$ \ ``link'' (cf. \cite{MS}). Hence, $I$ satisfies $(I_3)$ for some $\alpha>0, \ \omega=0$ and arbitrary $v \in E_2$.

\begin{lemma} \label{crl3}
	Under the hypotheses $(V_1)_r-(V_2)_r$ on $V$ and $(g_1)-(g_3)$ on $g$, $I$ satisfies $(I_3)$.	
\end{lemma}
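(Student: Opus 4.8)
The plan is to verify the three parts of $(I_3)$ with $\omega=0$, $v=0\in E_2$, and the manifolds $S=\partial B_\rho\cap E_1$ and $Q=\{re+u_2:\ r\ge 0,\ u_2\in E_2,\ \|re+u_2\|\le r_1\}$ introduced above, choosing first $\rho>0$ small and then $r_1>\rho$ large. For part $(i)$, since $I(u)=\|u\|^2-\int_{\mathbb{R}^N}G(x,u)\,dx$ for $u\in E_1=E^+$, I would combine the estimate (\ref{cre2}) with an exponent $p\in(2,2^*)$, the inequality (\ref{cre4}) written as $\|u\|_2^2\le(\sigma^+)^{-1}\|u\|^2$, and the continuous Strauss embedding $E\hookrightarrow L^p(\mathbb{R}^N)$ to obtain $I(u)\ge(1-\tfrac{\varepsilon}{2\sigma^+})\|u\|^2-C\|u\|^p$; taking $\varepsilon=\sigma^+$ and then $\rho$ small yields $I\ge\alpha:=\tfrac12\rho^2-C\rho^p>0$ on $S$, and trivially $S\subset E_1=0+E_1$. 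This step is routine. For part $(ii)$, $\partial Q$ is the union of the flat face $\{u_2\in E_2:\ \|u_2\|\le r_1\}$ and the spherical face $\{re+u_2:\ r\ge 0,\ \|re+u_2\|=r_1\}$; on the flat face, writing $u_2=u^0+u^-$ and using $Q_A(u^0)=0$, $Q_A(u^-)=-\|u^-\|^2$ together with $G\ge 0$ (from $(g_1)$), one gets $I(u_2)=-\|u^-\|^2-\int_{\mathbb{R}^N}G(x,u_2)\,dx\le 0$ at once. Thus it remains only to control the spherical face, and for that it is enough to show that $\mathcal{N}:=\{w\in\mathbb{R}^+e\oplus E_2:\ I(w)\ge 0\}$ is bounded, because then any $r_1>\rho$ with $\mathcal{N}\subset B_{r_1}$ forces $I<0=\omega$ on that face.

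The core of the argument is the boundedness of $\mathcal{N}$, which I would establish by contradiction. Suppose $w_n=r_ne+u_n^0+u_n^-\in\mathcal{N}$ with $R_n:=\|w_n\|\to\infty$. Using the mutual $Q_A$-orthogonality of $E^+,E^0,E^-$ and $\|e\|=1$, one has $Q_A(w_n)=r_n^2-\|u_n^-\|^2$, so dividing $Q_A(w_n)\ge\int_{\mathbb{R}^N}G(x,w_n)\,dx\ge 0$ by $R_n^2$ and setting $v_n:=w_n/R_n=s_ne+v_n^0+v_n^-$ (so $s_n^2+\|v_n^0\|_2^2+\|v_n^-\|^2=1$) gives $Q_A(v_n)=s_n^2-\|v_n^-\|^2\ge\int_{\mathbb{R}^N}G(x,w_n)R_n^{-2}\,dx\ge 0$. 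Passing to a subsequence, $s_n\to s$, $v_n^0\to v^0$ in the finite dimensional space $E^0$, $v_n^-\rightharpoonup v^-$ in $E$, and $v_n\rightharpoonup v:=se+v^0+v^-$ in $E$ with $v_n\to v$ in $L^\beta(\mathbb{R}^N)$ for $\beta\in(2,2^*)$ by Strauss, hence a.e. If $v=0$, then $s_n\to 0$ and $v_n^0\to 0$ force $\|v_n^-\|^2\to 1$, whence $Q_A(v_n)\to -1$, contradicting $Q_A(v_n)\ge 0$; so $v\ne 0$. For $v\ne 0$ I would use the linear bound $0\le G(x,\tau)\le\tfrac C2\tau^2$ for all $(x,\tau)$ (from $(g_1)$--$(g_2)$, via (\ref{cre1})) together with $G(x,\tau)\tau^{-2}\to h(x)/2$ as $|\tau|\to\infty$ (a consequence of $(g_2)$): since $R_n|v_n(x)|\to\infty$ wherever $v(x)\ne 0$, one checks $G(x,w_n)R_n^{-2}=G(x,w_n)w_n^{-2}v_n^2\to\tfrac{h(x)}2 v(x)^2$ a.e. Then Fatou's lemma, the bound $h\ge a_0$, the $L^2$-orthogonality of $E^+,E^0,E^-$ (which gives $\|v\|_2^2\ge s^2\|e\|_2^2$) and weak lower semicontinuity of the norm yield
\[
s^2-\|v^-\|^2\ \ge\ \frac{a_0}{2}\,\|v\|_2^2\ \ge\ \frac{a_0}{2}\,s^2\|e\|_2^2 ,
\]
hence $s^2\big(1-\tfrac{a_0}{2}\|e\|_2^2\big)\ge\|v^-\|^2\ge 0$. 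Here is the decisive point: by the sharp choice of $e$ recorded in (\ref{cre6}) we have $1=Q_A(e)<\tfrac{a_0}{2}\|e\|_2^2$, so the coefficient on the left is strictly negative, forcing $s=0$; substituting back gives $-\|v^-\|^2\ge\tfrac{a_0}2\|v\|_2^2\ge 0$, so $v^-=0$ and $\|v\|_2=0$, i.e.\ $v=0$, contradicting $v\ne 0$. Therefore $\mathcal{N}$ is bounded, and parts $(i)$, $(ii)$ of $(I_3)$ hold.

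Finally, part $(iii)$ --- that $S$ and $\partial Q$ link --- is the topological fact proved in \cite{MS} for precisely this configuration ($0<\rho<r_1$, $e\in E_1$ a unit vector, $E=E_1\oplus E_2$), so I would simply invoke it. I expect the boundedness of $\mathcal{N}$ to be the only genuine obstacle: because $H^1_{rad}(\mathbb{R}^N)$ is \emph{not} compactly embedded in $L^2(\mathbb{R}^N)$, one cannot pass to the limit in $\int_{\mathbb{R}^N}G(x,w_n)R_n^{-2}\,dx$ as an equality, and the argument survives only because the inequality $Q_A(v_n)\ge\int_{\mathbb{R}^N}G(x,w_n)R_n^{-2}\,dx$ points in the direction for which Fatou's lemma is available, and --- crucially --- because the strict inequality in $(g_3)$ has been encoded in (\ref{cre6}) as $Q_A(e)<\tfrac{a_0}{2}\|e\|_2^2$, which is exactly what excludes a nonzero limit $v$.
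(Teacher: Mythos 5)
Your proposal is correct, and it uses the paper's construction throughout: the same $S$, $Q$, the same vector $e$ chosen via (\ref{cre6}), the same flat-face observation $I\leq 0$ on $E_2$, and the same indirect/normalization scheme for the remaining part of $\partial Q$, with part $(iii)$ delegated to \cite{MS}. Where you genuinely diverge is in how the limit is taken for the unbounded-face estimate. The paper normalizes $\tilde u_n=s_ne+w_n$, deduces $0<s\leq1$, fixes a bounded domain $\Omega$ with $1<\frac12\int_\Omega h e^2$, and passes to the limit by dominated convergence on $\Omega$, comparing (\ref{crl3e4}) with (\ref{crl3e3}). You instead keep the integral over all of $\mathbb{R}^N$, exploit only the one-sided inequality $Q_A(v_n)\geq\int_{\mathbb{R}^N}G(x,w_n)R_n^{-2}\,dx\geq0$ together with Fatou's lemma (legitimate because $G\geq0$), the bound $h\geq a_0$, the $L^2(\mathbb{R}^N)$-orthogonality of $E^+,E^0,E^-$, and weak lower semicontinuity of the norm on $E^-$; the strict inequality $1=Q_A(e)<\frac{a_0}{2}\|e\|_2^2$ from (\ref{cre6}) then forces $s=0$, $v^-=0$, $\|v\|_2=0$, i.e.\ $v=0$, which you have excluded beforehand via the norm identity and $Q_A(v_n)\geq0$. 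Your route never needs the claim $s>0$, needs no $L^1(\Omega)$ dominating function, and it sidesteps the cross term $s\int_\Omega h\,e\,w\,dx$ that the paper's display implicitly discards when replacing $\frac12\int_\Omega h(se+w)^2$ by $\frac12\int_\Omega h(s^2e^2+w^2)$ (note $e$ and $w$ are orthogonal in $L^2(\mathbb{R}^N)$ but not in $L^2(\Omega,h\,dx)$); using the global constant $a_0=\inf h$ and full-space orthogonality makes your version the more robust one at exactly that point, at the price of invoking $(g_3)$ globally rather than only the local behaviour of $h$ on a fixed bounded set. Two minor bookkeeping remarks: your identities $I(u)=\|u\|^2-\int G$ on $E_1$ and $Q_A(re+u_2)=r^2-\|u_2^-\|^2$ follow the paper's own convention $\|u^+\|^2=Q_A(u^+)$ under which (\ref{cre6}) is stated, so your use of (\ref{cre6}) is consistent (the paper itself fluctuates by a factor $\frac12$ between (\ref{cre3}) and (\ref{crl3e1}), which affects nothing); and the pointwise limit $G(x,\tau)/\tau^2\to h(x)/2$ as $|\tau|\to\infty$, uniformly in $x$, should be recorded explicitly as a consequence of $(g_2)$, as the paper also uses it without proof.
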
	

\begin{proof}
Since $S\subset E_1$, by Remark \ref{crr2},  for $2 < p < 2^*$ and for all $u_1 \in S$, it yields
	\begin{eqnarray}\label{crl3e1}
	I(u_1)&=&\dfrac{1}{2}||u_1||^2 - \displaystyle\int_{\mathbb{R}^N}G(x,u_1(x))dx \nonumber\\
	&\geq&\dfrac{1}{2}\rho^2- \int_{\mathbb{R}^N}\left(\dfrac{\varepsilon}{2}|u_1(x)|^2 + \dfrac{C_{\varepsilon}}{p}|u_1(x)|^p\right)dx \nonumber\\
	&\geq& \dfrac{1}{2}\rho^2 - \left(\dfrac{\varepsilon}{2}C_2^2||u_1||^2+\dfrac{C_{\varepsilon}}{p}C_p^p||u_1||^p\right)\nonumber\\
	&=& \rho^2\left[\dfrac{1}{2}\big(1-\varepsilon C_2^2\big) - \dfrac{C_{\varepsilon}}{p}C_p^p\rho^{p-2}\right]\nonumber \\
	&\geq& \rho^2(d_1-d_2) = \alpha>0,
	\end{eqnarray}
	where $\varepsilon, \rho$ are sufficiently small, such that $1>\varepsilon C_2^2$ and also
	$$d_1:= \dfrac{1}{2}\big(1-\varepsilon C_2^2\big)>\dfrac{C_{\varepsilon}}{p}C_p^p\rho^{p-2}=:d_2.$$
	Therefore, from (\ref{crl3e1}), $(I_3) \ (i)$ holds for $I$.
	
	\hspace{0,75cm}In order to prove that $I$ satisfies $(I_3) \ (ii)$ in Theorem \ref{ALT}, with $\omega = 0$, observe that $I(u)\leq 0$, for all $u \in E_2=E^-\oplus E^0$, then it suffices to show that $I(re+u)\leq 0$ for $r>0, u \in E_2$ and $||re+u||\geq r_1$, for some $r_1>0$ large enough. Arguing indirectly assume that for some sequence $(r_ne+u_n)\subset \mathbb{R}^+e \oplus E_2$ with $ ||r_ne+u_n||\to +\infty$, $I(r_ne+u_n)>0$ holds, for all $n \in \mathbb{N}$. Seeking a contradiction, set $$\tilde{u}_n := \dfrac{r_ne+u_n}{||r_ne+u_n||}= s_ne+ w_n,$$
	where $ s_n \in \mathbb{R}^+, w_n=w^-_n +w^0_n \in E_2 = E^-\oplus E^0$ and $||\tilde{u}_n||=1$. Provided that $(\tilde{u}_n)$ is bounded, up to subsequences it follows that $\tilde{u}_n \rightharpoonup \tilde{u}= se + w$ in $E$, hence $\tilde{u}_n \to u$ in $L^2_{loc}(\mathbb{R}^N)$. Then, up to subsequences, $\tilde{u}_n(x) \to \tilde{u}(x)$ almost everywhere in $\mathbb{R}^N$, $s_n \to s$ in $\mathbb{R}^+$, $w^-_n \rightharpoonup w$ in $E,$ and \ $w^0_n \to w^0$ in $E$, since $s_n$, $w_n^-$ and $w^0_n$ are also bounded, $(w^0_n) \subset E^0$ and $E^0$ is finite dimensional. Noting that $1 = ||s_ne+w_n||^2= s_n^2 +||w^-_n||^2+||w^0_n||^2$, it follows that $0\leq s^2_n\leq 1$, and it yields
	\begin{eqnarray}\label{crl3e2}
	\dfrac{I(r_ne+u_n)}{||r_ne+u_n||^2} &=& {s^2_n}||e||^2 - ||w^-_n||^2 - \int_{\mathbb{R}^N}\dfrac{G(x,r_ne(x)+u_n(x))}{||r_ne+u_n||^2}dx\nonumber\\
	&=& 2s^2_n - {1} - ||w^0_n||^2 - \int_{\mathbb{R}^N}\dfrac{G(x, r_ne(x)+u_n(x))}{||r_ne+u_n||^2}dx  >0,
	\end{eqnarray}
	hence $0<s\leq1$. 
	Moreover, from (\ref{cre6}) it is possible to choose a bounded domain $\Omega \subset \mathbb{R}^N$, such that
	$$1< \dfrac{1}{2}\displaystyle\int_{\Omega}h(x)e^2(x)dx.$$
	Hence, 
	\begin{eqnarray}\label{crl3e3}
	0&>&s^2-s^2\dfrac{1}{2}\displaystyle\int_{\Omega}h(x)e^2(x)dx\nonumber\\
	&\geq& s^2\left(1 - \dfrac{1}{2}\displaystyle\int_{\Omega}h(x)e^2(x)dx\right) - (1 + ||w^0||^2-s^2) - \dfrac{1}{2}\displaystyle\int_{\Omega}h(x)w^2(x)dx\nonumber\\
	&=& s^2\left(2 - \dfrac{1}{2}\displaystyle\int_{\Omega}h(x)e^2(x)dx\right) - 1 - ||w^0||^2 - \dfrac{1}{2}\displaystyle\int_{\Omega}h(x)w^2(x)dx.
	\end{eqnarray}
	On the other hand, from  assumptions $(g_1)-(g_2)$ and since $\tilde{u}_n$ is convergent in $L^2(\Omega)$, there exists some $\psi \in L^1(\Omega)$ such that $$\left|\dfrac{G(\ \cdot \ ,r_ne(\cdot)+u_n(\cdot))}{||r_ne+u_n||^2}\right|\leq r_{\infty}|\tilde{u}_n(\cdot)|^2\leq \psi(\cdot)\in L^1(\Omega).$$
	Moreover, provided that $||r_ne + u_n||\to +\infty$, and $\tilde{u}_n(x)\to \tilde{u}(x)\not= 0$, almost everywhere in $supp(\tilde{u})$,  it follows that $u_n(x)=\tilde{u}_n(x)||r_ne + u_n(x) ||\to +\infty$ almost everywhere in $supp(\tilde{u})$, as $n\to +\infty$, hence
	$$\dfrac{G(x,r_ne(x)+u_n(x))}{||r_ne+u_n||^2} = \dfrac{G(x,\tilde{u}_n(x)||r_ne+u_n||)\tilde{u}^2_n(x)}{\tilde{u}^2_n(x)||r_ne+u_n||^2} \to \dfrac{1}{2}h(x)\tilde{u}^2(x),$$\\
	almost everywhere in $supp(\tilde{z})$ as $n \to +\infty$. Note that, $supp(\tilde{u}) \not=\emptyset$, because $\tilde{u} = se+w$, with $supp(e)\not= \emptyset$ and $(e,w)_{L^2(\mathbb{R}^N)}=0$. Thus, by Lebesgue Dominated Convergence Theorem,
	
	$$\int_{\Omega}\dfrac{G(x,r_ne(x)+u_n(x))}{||r_ne+u_n||^2}dx \to \dfrac{1}{2}\int_{\Omega}h(x)\big(se(x) + w(x)\big)^2dx,$$\\
	as $n \to +\infty.$ From (\ref{crl3e2}) one has
	$$2s^2_n - {1} - ||w^0_n||^2 - \int_{\Omega}\dfrac{G(x, r_ne(x)+u_n(x))}{||r_ne+u_n||^2}dx  >0.$$	
	Passing to the limit as $n \to +\infty$, it yields
	
	\begin{eqnarray}\label{crl3e4}
	0&\leq& 2s^2 - {1} - ||w^0||^2 - \dfrac{1}{2}\int_{\Omega}h(x)\Big(s^2e^2(x) + w^2(x)\Big)dx \nonumber\\ &=& s^2\left(2 - \dfrac{1}{2}\displaystyle\int_{\Omega}h(x)e^2(x)dx\right) - {1} - ||w^0||^2 - \dfrac{1}{2}\displaystyle\int_{\Omega}h(x)w^2(x)dx,\\
	\nonumber
	\end{eqnarray}
	which is contrary to (\ref{crl3e3}). Therefore the result holds.
\end{proof}

\section{The Boundedness of Cerami Sequences}

\begin{lemma}	\label{crl4}
	Suppose that $V$ satisfies $(V_1)_r-(V_2)_r$ and  $g$ satisfies $(g_1)-(g_4)$, then $I$ satisfies $(I_4)$.	
\end{lemma}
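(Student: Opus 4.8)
The plan is to argue by contradiction: suppose $(u_n)\subset E$ satisfies $I(u_n)=O(1)$ and $(1+\|u_n\|)\|I'(u_n)\|\to0$ but $\|u_n\|\to+\infty$, and set $v_n:=u_n/\|u_n\|$, $v_n=v_n^++v_n^0+v_n^-\in E^+\oplus E^0\oplus E^-$. Passing to a subsequence, $v_n\rightharpoonup v$ in $E$; since $E=H^1_{rad}(\mathbb{R}^N)$ embeds compactly into $L^q(\mathbb{R}^N)$ for $2<q<2^*$ and $E^0=\ker A$ is finite dimensional, we also have $v_n\to v$ in $L^q(\mathbb{R}^N)$ for such $q$, in $L^2_{loc}(\mathbb{R}^N)$, a.e. in $\mathbb{R}^N$, and $v_n^0\to v^0$ strongly. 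By $(g_1)$ and $(g_2)$ the secant quotient $\ell_n(x):=g(x,u_n(x))/u_n(x)$ (put $\ell_n:=0$ where $u_n=0$) is uniformly bounded, $|\ell_n|\le M$, and $\ell_n(x)\to h(x)$ at every $x$ with $|u_n(x)|\to+\infty$. Testing $I'(u_n)$ with $u_n^{\pm}$ and using $\|u_n\|\,\|I'(u_n)\|\to0$ yields, up to a fixed positive constant, $\|v_n^{\pm}\|^2=\pm\int_{\mathbb{R}^N}\ell_n v_n v_n^{\pm}\,dx+o(1)$; since $|\ell_n|\le M$ and $E\hookrightarrow L^2$, this gives $\|v_n^{\pm}\|\lesssim\|v_n\|_2$, which together with $\|v_n^0\|_2\le\|v_n\|_2$ and $\|v_n\|=1$ forces $\liminf_n\|v_n\|_2>0$.

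Next I would identify the limit equation. Dividing $I'(u_n)=o(1/\|u_n\|)$ by $\|u_n\|$ and pairing with $\varphi\in C_c^{\infty}(\mathbb{R}^N)$ gives $(Av_n,\varphi)_{L^2}-\int_{\mathbb{R}^N}\ell_n v_n\varphi\,dx\to0$. On $\{v=0\}$ one bounds the second integrand by $M|v_n||\varphi|$ and uses $v_n\to0$ in $L^2(\supp\varphi)$; on $\{v\ne0\}$ one has $|u_n|=|v_n|\,\|u_n\|\to+\infty$ a.e., so $\ell_n\to h$ a.e. there, and since $v_n\to v$ in $L^1(\supp\varphi)$ the family $\{v_n\varphi\}$ is uniformly integrable, so Vitali's theorem applies. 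Hence $(Av,\varphi)_{L^2}=\int h v\varphi$ for every test $\varphi$, and because $hv\in L^2$ (as $h\in L^\infty$), elliptic regularity gives $v\in H^2$ with $\mathcal{O}v=Av-\mathcal{H}v=0$. As $v$ is radial, this is an eigenfunction relation for $\mathcal{O}$ on $L^2(\mathbb{R}^N)$, so hypothesis $(g_4)$ forces $v=0$.

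It remains to rule out $v=0$, and this is where I expect the real work. With $v=0$ we get $v_n^0\to0$, so $\|v_n^+\|^2+\|v_n^-\|^2\to1$; and from $G\ge0$ (hypothesis $(g_1)$) together with $I(u_n)/\|u_n\|^2\to0$ we obtain $\|v_n^+\|^2-\|v_n^-\|^2\ge-o(1)$, hence $\liminf_n\|v_n^+\|^2\ge\tfrac12$. On the other hand the identity above reads $\|v_n^+\|^2=\int_{\mathbb{R}^N}\ell_n v_n v_n^+\,dx+o(1)$ (up to a positive constant), and the goal is to show the right-hand side tends to $0$, contradicting $\liminf\|v_n^+\|^2\ge\tfrac12$. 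To estimate $\int\ell_n v_n v_n^+$ one splits $\mathbb{R}^N$ according to the size of $|u_n|$: where $|u_n|\le\delta$, $(g_1)$ gives $|\ell_n|\le\eta$, so that part is $O(\eta)$ uniformly; where $|u_n|\ge M$, $(g_2)$ gives $|\ell_n-h|\le\eta$, reducing that part to $\int_{\{|u_n|\ge M\}}h v_n v_n^+$ up to $O(\eta)$; the intermediate range together with the part of the large range inside a fixed ball $B_R$ vanishes as $n\to\infty$ by $v_n\to0$ in $L^2(B_R)$; and the exterior contribution on $B_R^c$ one would control with the uniform pointwise radial decay $|v_n(x)|,|v_n^+(x)|\le C|x|^{(1-N)/2}$ together with the spectral bounds (\ref{cre4})--(\ref{cre5}) and the strict inequality in $(g_3)$. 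Then letting $n\to\infty$, $R\to\infty$, $M\to\infty$, $\delta\to0$ and $\eta\to0$ would yield the contradiction, so $(I_4)$ holds. The main obstacle is precisely this last step: $H^1_{rad}(\mathbb{R}^N)$ is \emph{not} compactly embedded in $L^2(\mathbb{R}^N)$, so the $L^2$-mass of $v_n$ need not vanish and may slide off to infinity, and one cannot dominate the nonlinear term simply by $\|v_n\|_2$; the delicate matching is to arrange the region decomposition so that the escaping $L^2$-mass sits exactly where $\ell_n-h$ is small and then balance the surviving term $\int h v_n v_n^+$ against the indefinite quadratic form via $(g_3)$. The other candidate difficulty, passing to the limit in $\int\ell_n v_n\varphi$ with $\ell_n$ merely bounded off $\{v\ne0\}$, is disposed of by the Vitali argument above.
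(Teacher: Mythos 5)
Your setup, the normalization $v_n=u_n/\|u_n\|$, the identities obtained by testing $I'(u_n)$ with $u_n^{\pm}$, the passage to the limit equation $\mathcal{O}v=0$ against test functions, and the use of $(g_4)$ to conclude $v=0$ all match the paper's argument (your Vitali treatment on $\supp\varphi$ is a fine substitute for the paper's dominated-convergence step). The genuine gap is the final step, ruling out $v=0$, which you yourself flag as ``the main obstacle'' and only describe in the conditional. As written it does not close: the pointwise radial bound $|v_n(x)|,|v_n^+(x)|\le C|x|^{(1-N)/2}$ is not square-integrable at infinity, so it gives no smallness of $\int_{B_R^c}|v_n|\,|v_n^+|\,dx$; the intermediate region $\{\delta\le |u_n|\le M\}$ outside $B_R$ is not controlled by anything you state; and $(g_3)$ together with (\ref{cre4})--(\ref{cre5}) cannot ``balance'' the surviving term, because $h\ge a_0>\sigma^+$ pushes the inequality the wrong way: if the $L^2$-mass of $v_n\approx v_n^+$ escapes to a region where $\ell_n\approx h$, the identity $\|v_n^+\|^2=\int \ell_n v_n v_n^+ +o(1)$ is perfectly consistent and produces no contradiction. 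So the mechanism that kills the escaping $L^2$-mass is missing from your plan.

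The paper closes the argument differently, precisely at this point. It subtracts the two tested identities to get $1-\|\tilde u_n^0\|^2=\int_{\mathbb{R}^N}\frac{g(x,u_n)}{u_n}\big[(\tilde u_n^+)^2-(\tilde u_n^-)^2\big]\,dx+o_n(1)$, and then handles the exterior of a large ball not through decay of $v_n$ but through the spatial decay of the radial functions $u_n$ themselves: for $|x|\ge r$ one has $|u_n(x)|\le\delta$ uniformly in $n$ (radial decay lemma), hence by $(g_1)$ the quotient $g(x,u_n)/u_n$ is at most $\varepsilon$ there, and the whole exterior contribution is $O(\varepsilon)$ \emph{regardless} of where the $L^2$-mass of $v_n$ sits; the interior converges by dominated convergence to $\int h\big[(\tilde u^+)^2-(\tilde u^-)^2\big]\,dx$, which vanishes once $\tilde u=0$, so the identity degenerates to $1=o(1)$, the desired contradiction. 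The missing idea, relative to your sketch, is thus to place the escaping mass where the nonlinear quotient is small (near $s=0$, via $(g_1)$ and the decay of $u_n$), not where $\ell_n-h$ is small; note also that neither $(g_3)$ nor (\ref{cre4})--(\ref{cre5}) enters this lemma in the paper. (Your instinct that this exterior estimate is the delicate point is sound --- the paper resolves it by invoking uniform decay for the sequence $(u_n)$ --- but your proposal neither supplies that ingredient nor a replacement for it.)
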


\begin{proof}
	Let \ $b>0$ be \ an \ arbitrary \ constant, \ and \ take \ $(u_n) \subset I^{-1}([c-b,c+b])$ \ such that $\left(1+||u_n||\right)||I'(u_n)||\to0$, it is necessary to show that $(u_n)$ is bounded. Suppose by contradiction that $||u_n||\to +\infty$, up to subsequences. Setting $\tilde{u}_n:=\dfrac{u_n}{||u_n||}$, it is bounded, hence $\tilde{u}_n \rightharpoonup \tilde{u}$ in $E$ and $\tilde{u}_n \to \tilde{u}$ in $L^\beta(\mathbb{R}^N)$, for $\beta \in (2,2^*)$, due to the compact embeddings previously mentioned (cf. \cite{S} and \cite{BeLi}). Writing $u_n=u^+_n+u^-_n+u^0_n \in E^+\oplus E^-\oplus E^0$, by choice of $u_n$ it satisfies
	\begin{eqnarray}\label{crl4e1}
	o_n(1)&=& I'(u_n)\dfrac{u^+_n}{||u_n||^2} \nonumber\\
	&=&\dfrac{1}{||u_n||}I'(u_n)\tilde{u}^+_{n}\nonumber\\
	&=&||\tilde{u}^+_{n}||^2 - \int_{\mathbb{R}^N}\dfrac{g(x,u_n(x))}{u_n(x)}\tilde{u}_n(x)\tilde{u}^+_{n}(x)dx.
	\end{eqnarray}
	and
	\begin{eqnarray}\label{crl4e2}
	o_n(1)&=& I'(u_n)\dfrac{u^-_n}{||u_n||^2} \nonumber\\
	&=&\dfrac{1}{||u_n||}I'(u_n)\tilde{u}^-_{n}\nonumber\\
	&=&||\tilde{u}^-_{n}||^2 - \int_{\mathbb{R}^N}\dfrac{g(x,u_n(x))}{u_n(x)}\tilde{u}_n(x)\tilde{u}^-_{n}(x)dx. \\
	\nonumber
	\end{eqnarray}
	Subtracting (\ref{crl4e2}) from (\ref{crl4e1}), and using that $1 = ||\tilde{u}_n^+||^2 + ||\tilde{u}_n^-||^2 + ||\tilde{u}_n^0||^2$, it yields\\
	\begin{equation}\label{crl4e3}
	o_n(1)=1 - ||\tilde{u}^0_{n}||^2 - \int_{\mathbb{R}^N}\dfrac{g(x,u_n(x))}{u_n(x)}\Big[(\tilde{u}^+_{n}(x))^2-(\tilde{u}^-_{n}(x))^2\Big]dx.\\
	\end{equation}\\
	Provided that $(\tilde{u}^0_n) \subset E^0$, which is finite dimensional, then the weak convergence implies that $\tilde{u}^0_n \to \tilde{u}^0$ in $E$. Furthermore, since $\tilde{u}_n\to\tilde{u}$ in $L_{loc}^2(\mathbb{R}^N)$, fixed $B_r(0) \subset \mathbb{R}^N$ \ there \ exist \ $\psi_r^+, \psi_r^- \in L^2(B_r(0))$ \ such \ that $|\tilde{u}^+_n(x)|\leq \psi_r^+(x)$  and $|\tilde{u}^-_n(x)|\leq \psi_r^-(x)$, almost everywhere in $B_r(0)$, hence from Remark \ref{crr2} it follows that
	$$\left|\dfrac{g(\cdot,u_n(\cdot))}{u_n(\cdot)}\left[\big(\tilde{u}^+_{n}(\cdot)\big)^2-\big(\tilde{u}^-_{n}(\cdot)\big)^2\right]\right|\leq C\left[\big(\psi_r^+(\cdot)\big)^2+\big(\psi_r^-(\cdot)\big)^2\right] \in L^1(B_r(0)).$$
	Since $\tilde{u}_n \to \tilde{u}$ in $L^2(B_r(0))$, $u_n(x)\to +\infty$, for all $x \in B_r(0)$ such that $\tilde{u}(x)\not=0$, then from $(g_2)$ it follows that
	\begin{equation*}
	\dfrac{g(x,u_n(x))}{u_n(x)}\left[\big(\tilde{u}^+_{n}(x)\big)^2-\big(\tilde{u}^-_{n}(x)\big)^2\right] \to h(x)\left[\big(\tilde{u}^+(x)\big)^2-\big(\tilde{u}^-(x)\big)^2\right],
	\end{equation*}
	as $n \to +\infty$, for all $x \in B_r(0)$ such that $\tilde{u}(x)\not=0$. Therefore, by Lebesgue Dominated Convergence Theorem one has
	\begin{equation} \label{crl4e4}
	\int_{B_r(0)}\dfrac{g(x,u_n(x))}{u_n(x)}\Big[(\tilde{u}^+_{n}(x))^2-(\tilde{u}^-_{n}(x))^2\Big]dx \to \int_{B_r(0)}h(x)\left[\big(\tilde{u}^+(x)\big)^2-\big(\tilde{u}^-(x)\big)^2\right]dx.
	\end{equation}
	Moreover, since $ (u_n)\subset H^1_{rad}(\mathbb{R}^N)$ it follows that 
	$$\displaystyle\lim_{|x| \to +\infty}u_n(x)= 0,$$
	uniformly with respect to $n$. Hence, given
	$\delta>0$, there exists $r>0$ such that $|x|\geq r$ implies  $|u_n(x)|\leq \delta$ for all $n \in \mathbb{N}$. In addition, given $\varepsilon >0$ by $(g_1)$ there exists $\delta >0$ sufficiently small such that $|g(x,s)| \leq \varepsilon|s|$ for all $|s|\leq \delta$. Hence, given $\varepsilon >0$, for $r>0$ sufficiently large, it yields
	$$|g(x,u_n(x))| \leq \varepsilon|u_n(x)|,$$
	for all $|x|\geq r$ and  since $(u_n)$ is bounded in $L^2(\mathbb{R}^N)$, it yields
	\begin{eqnarray} \label{crl4e5}
	\int_{\mathbb{R}^N \backslash B_r(0)}\dfrac{g(x,u_n(x))}{u_n(x)}\Big[(\tilde{u}^+_{n}(x))^2-(\tilde{u}^-_{n}(x))^2\Big]dx	&\leq& \varepsilon\int_{\mathbb{R}^N \backslash B_r(0)}\Big[(\tilde{u}^+_{n}(x))^2+(\tilde{u}^-_{n}(x))^2\Big]dx \nonumber\\
	&\leq&2\varepsilon \sup_n||u_n||_2^2 \nonumber\\
	&\leq&C\varepsilon.
	\end{eqnarray}
	Thus, combining (\ref{crl4e4}) and (\ref{crl4e5}) it follows that
	\begin{eqnarray} \label{crl4e6}
	&&\int_{\mathbb{R}^N}\dfrac{g(x,u_n(x))}{u_n(x)}\Big[(\tilde{u}^+_{n}(x))^2-(\tilde{u}^-_{n}(x))^2\Big]dx \nonumber\\
	&=& \int_{B_r(0)}\dfrac{g(x,u_n(x))}{u_n(x)}\Big[(\tilde{u}^+_{n}(x))^2-(\tilde{u}^-_{n}(x))^2\Big]dx \nonumber\\
	&+& \int_{\mathbb{R}^N \backslash B_r(0)}\dfrac{g(x,u_n(x))}{u_n(x)}\Big[(\tilde{u}^+_{n}(x))^2-(\tilde{u}^-_{n}(x))^2\Big]dx \nonumber\\
	&=& \int_{B_r(0)}h(x)\left[\big(\tilde{u}^+(x)\big)^2-\big(\tilde{u}^-(x)\big)^2\right]dx \nonumber\\
	&+& C\varepsilon + o_n(1),
	\end{eqnarray}
	as $n \to +\infty$. Hence, passing to the limit as $n \to +\infty$ and $\varepsilon \to 0^+$, it implies that
	\begin{equation} \label{crl4e7}
	\int_{\mathbb{R}^N}\dfrac{g(x,u_n(x))}{u_n(x)}\Big[(\tilde{u}^+_{n}(x))^2-(\tilde{u}^-_{n}(x))^2\Big]dx \to \int_{\mathbb{R}^N}h(x)\left[\big(\tilde{u}^+(x)\big)^2-\big(\tilde{u}^-(x)\big)^2\right]dx,
	\end{equation}
	as $n \to +\infty$. Therefore, passing to the limit in (\ref{crl4e3}) as $n \to +\infty$, it yields
	\begin{equation}\label{crl4e8}
	\int_{\mathbb{R}^N}h(x)\left[\big(\tilde{u}^+(x)\big)^2-\big(\tilde{u}^-(x)\big)^2\right]dx = 1 - ||\tilde{u}^0||^2.
	\end{equation}
	On the other hand, given $\varphi \in C_0^{\infty}(\mathbb{R}^N)$ and setting $supp(\varphi):=K$, since $\tilde{u}_n \to \tilde{u}$ in $L^2(K)$, in virtue of similar arguments, by applying Lebesgue Dominated Convergence Theorem it follows that
	$$\int_{K}\dfrac{g(x,u_n(x))}{u_n(x)}\tilde{u}_n(x)\varphi(x)dx = \int_{K}h(x)\tilde{u}_n(x)\varphi(x)dx + o_n(1),$$
	as $n \to +\infty$. Hence, it yields
	\begin{eqnarray}\label{crl4e9}
	o_n(1)&=& \dfrac{I'(u_n)\varphi}{||u_n||} \nonumber\\
	&=& \dfrac{Q'_A(u_n)\varphi}{||u_n||}- \int_{K}\dfrac{g(x,u_n(x))}{u_n(x)}\tilde{u}_n(x)\varphi(x)dx \nonumber\\
	&=& (A\tilde{u}_n,\varphi)_{L^2(\mathbb{R}^N)} - \int_{K}h(x)\tilde{u}_n(x)\varphi(x)dx + o_n(1) \nonumber\\
	&=& (\mathcal{O}\tilde{u}_n,\varphi)_{L^2(\mathbb{R}^N)} + o_n(1) \nonumber\\
	&=& (\mathcal{O}\tilde{u},\varphi)_{L^2(\mathbb{R}^N)} + o_n(1).
	\end{eqnarray}
	Due to (\ref{crl4e9}), if $\tilde{u} \not= 0$, it is an eigenvector of $\mathcal{O}$, with eigenvalue $0$. Nevertheless, from $(g_4)$, $0\notin \sigma_p(\mathcal{O})$ and hence $\tilde{u}=0$. It means that $\tilde{u}^+=\tilde{u}^-=\tilde{u}^0 = 0$ and thus, (\ref{crl4e8}) yields a contradiction. Therefore, $(u_n)$ is bounded and the result holds.
\end{proof}

\hspace{0.75cm}Finally it is possible to prove the main result of this section.\\

\hspace{-0.5cm}\textit{Proof of Theorem \ref{crt1}.}
Provided that $I$ satisfies all assumptions $(I_1)-(I_4)$ in Theorem \ref{ALT}, it ensures a critical point  $u \in E$ of $I$, with $I(u)=c\geq\alpha>0$, hence $u$ is a non-trivial critical point of $I:E\to \mathbb{R}$. It implies that $I'(u)v = 0$, for all $v \in H^1_{rad}(\mathbb{R}^N)$. Nevertheless, the Principle of Symmetric Criticality \cite{Pal} implies that $I'(u)v = 0$ for all $v \in H^1(\mathbb{R}^N)$, namely, $u$ is a critical point of $I$ as a functional defined on the whole $H^1(\mathbb{R}^N)$. Since $I\in C^1(H^1(\mathbb{R}^N),\mathbb{R})$, it yields that $u$ is a nontrivial weak solution of $(P_r)$. In addition, since $u \in E$, it is a radial weak solution. 
\vspace{-0.5cm}
\begin{flushright}$\bf{\square}$\end{flushright}

\bigskip
 
 \hspace{0.75cm}Note that setting $\bar{g}(x,s)=0$ for $s<0$ and $\bar{g}(x,s)=g(x,s)$ for $s\geq0$ and repeating the arguments, it is possible to obtain a positive solution for problem (\ref{P_r}).

\end{document}